\newcommand{\hadgesh}[1]{\textcolor{MidnightBlue}{\emph{#1}}}
\newcommand{\GG}{\mathcal{G}}
\newcommand{\HH}{\mathcal{H}}
\newcommand{\R}{{\mathbb R}}
\newtheorem{Thm}{Theorem}[section]
\newtheorem{Defi}[Thm]{Definition}
\newtheorem{Lem}[Thm]{Lemma}
\newtheorem{Term}[Thm]{Terminology}
\newcommand{\outg}{\mathrm{out}}
\newcommand{\ing}{\mathrm{in}}
\newcommand{\outdeg}{\mathrm{oud}}
\newcommand{\indeg}{\mathrm{ind}}
\newcommand{\F}{{\mathbb F}}
\newcommand{\defeq}{\overset{\textup{def}}{=}}
\newcommand{\PP}{\mathcal{P}}
\newcommand{\LL}{\mathcal{L}}
\DeclareMathAlphabet\EuR{U}{eur}{m}{n}
\SetMathAlphabet\EuR{bold}{U}{eur}{b}{n}
\newcommand{\curs}{\EuR}
\newcommand{\Tr}{\mathrm{Tr}}
\newcommand{\chg}[1]{\textcolor{black}{#1}}
\author{Pedro Concei\c{c}\~ao}
\address{Institute of Mathematics, University of Aberdeen }
\email{p.rodriguesdaconceicao.19@abdn.ac.uk}
\author{Dejan Govc}
\address{Faculty of Mathematics and Physics, University of Ljubljana}
\email{dejan.govc@fmf.uni-lj.si}
\author{J\=anis Lazovskis}
\address{Riga Business School, Riga Technical University }
\email{janis.lazovskis@rbs.lv}
\author{Ran Levi}
\address{Institute of Mathematics, University of Aberdeen}
\email{r.levi@abdn.ac.uk}
\author{Henri Riihim\"aki}
\address{Institute of Mathematics, University of Aberdeen }
\email{henri.riihimaki@abdn.ac.uk}
\author{Jason P. Smith}
\address{Department of Mathematics and Physics, Nottingham Trent University}
\email{jason.smith@ntu.ac.uk}
\keywords{Binary Dynamics, Directed Graphs, Graph and Topological Parameters, Neural Networks, Signal Classification}
\title[Neighbourhood in Dynamic Digraphs]{An application of neighbourhoods in digraphs to the classification of binary dynamics}
\begin{document}
\maketitle

\begin{abstract}
A binary state on a graph means an assignment of binary values to its vertices.  
A time dependent sequence of binary states is referred to as binary dynamics. 
We describe a method for the classification of binary dynamics of digraphs, using particular choices of closed neighbourhoods. Our motivation and application comes from neuroscience, where a directed graph is an abstraction of neurons and their connections, and where the simplification of large amounts of data is key to any computation. We present a topological/graph theoretic method for extracting information out of binary dynamics on a graph, based on a selection of a relatively small number of vertices and their neighbourhoods. We consider existing and introduce new real-valued functions on closed neighbourhoods, comparing them by their ability to accurately classify different binary dynamics. We describe a classification algorithm that uses two parameters and sets up a machine learning pipeline. We   demonstrate the effectiveness of the method on simulated activity on a digital reconstruction of  cortical tissue of a rat, and on a non-biological random graph with similar density.
\end{abstract}

\section{Introduction}
A \hadgesh{binary state} on a graph means an assignment of binary values to its vertices. A motivating example  in this article appears in the context of neuroscience. If one encodes \chg{the connectivity of} a neuronal network  as a directed graph, then the spikes produced by the neurons at an instant of time is a binary state on the encoding graph.  Allowing time to vary and recording the spiking patterns of the neurons in the network produces an example of  a \hadgesh{binary dynamics} on the encoding graph, namely  a one-parameter family of binary states on its vertices. A network of neurons that receives external signals and responds to those signals thus generates a binary dynamics. Binary dynamics appear in other contexts as well \cite{Gleeson, Samuelsson-Socolar}, but in this paper we use networks of spiking neurons as a primary example.

The \hadgesh{signal classification problem}, i.e., the task of correctly pairing  a  signal  injected into a neuronal network with the response of the network, or in other words, identifying the incoming signal from the response, is generally very challenging. This paper proposes a methodology by which this task can be approached and provides scenarios in which this methodology is successful.

Considering raw binary states on a large graph  is generally quite problematic for a number of reasons. First, the sheer number of theoretically possible states makes analysing a collection of them a daunting task \cite{comm-concep-technical-advances,history-simu-neuro}. Moreover, natural systems such as neuronal networks tend to be very noisy, in the sense that the emerging dynamics from the same stimulus may take a rather large variety of forms \cite{dim-reduc-neural, Stein-Gossen-Jones}. Finally, it is a general working hypothesis in studying network dynamics that the network structure affects its function \cite{connect-brain-func,higher-order-syn,connect-dynamics-curto-morrison,Rubinov-Sporns}. \chg{This paradigm in neuroscience is often encapsulated by the slogan \emph{``neurons that fire together tend to wire together''}. Hence, when studying dynamics on a neuronal network, it makes sense to examine assemblies of vertices, or subgraphs, and the way in which they behave as dynamical sub-units, instead of considering individual vertices in the network \cite{topo-cell-assembly,cell-groups,network-motifs}.} 

In previous studies we considered cliques in a directed graph, with various orientations of the connections between nodes, as basic units from which one could extract information about binary dynamics \cite{GLS,Fund}. However, the results in these papers fell short of suggesting an efficient classifier of binary dynamics \cite[Sections 4.1-4.2]{GLS}. Indeed, when we applied the methods of \cite{GLS,Fund} to the main dataset used in this paper, we obtained   unsatisfactory classification accuracy. This suggests  that in a graph that models a natural system  cliques may be too small to carry the amount of information required for classification of a noisy signal. \chg{This motivates us to build our classification strategy on neuron assemblies, where the richer structure serves a dual purpose of amalgamating dynamical information and regulating the noise inherent in single neurons or cliques.}

\chg{The guiding hypothesis of this paper is that a collection of vertex assemblies, forming a subgraph of the ambient connectivity graph encoding a network, can be used in classification of binary dynamics on the network. A network of spiking neurons is our primary example. Taking this hypothesis as a guideline, we introduce a very flexible  feature generation methodology that takes as input  binary dynamics on a digraph $\GG$  induced on a  preselected collection of subgraphs of $\GG$,  and turns it into a feature vector, which can then be used in machine learning classification.  The neighbourhood of a vertex $v$ in the graph $\GG$, namely the subgraph of $\GG$ that is induced by $v$ and all its  neighbours in $\GG$, suggests itself naturally as a type of subgraph to be considered in this procedure,  and is a central object of study in this paper. Vertex neighbourhoods have been studied extensively in graph theory and its applications \cite{Bianconi}. 
An outline is given below and a full description in Methods.}

The way we apply the method can be summarised as follows. Given a directed graph $\GG$ we use a variety of real valued vertex functions that we refer to as \chg{\hadgesh{selection parameters}} and are derived from the neighbourhood of each vertex,  to create a sorted list of the vertices. With respect to each such parameter, we pick the ``top performing'' vertices and select their neighbourhoods. To that collection of subgraphs we apply our feature generation method, \chg{which is based again on applying the same parameters to the selected neighbourhoods, now in the role of \emph{feature parameters}.  All the parameters we use are invariant under isomorphism of directed graphs, i.e. graph properties that remain unchanged when the vertices are permuted while leaving their connectivity intact. Therefore we occasionally refer to certain parameters as ``graph invariants".}

\chg{The choice of parameters is related to measures of  network connectivity and architecture. For instance, the parameters  {\bf fcc} and {\bf tcc} (see Table \ref{tab:parameters}) are examples of measures of functional segregation \cite{Rubinov-Sporns}.  The parameters we refer to as \emph{spectral parameters} arise in spectral graph theory \cite{Chung} and are prevalent in many applications, including in neuroscience. For instance, the paper \cite{Laplacian} studies the Laplacian spectrum of the macroscopic anatomical neural networks of  macaques and cats, and the microscopic network of the C-elegans. The topological parameters, such as the Euler characteristic  {\bf ec} and Betti numbers are classical topological invariants. In \cite{Fund}  these were used in various ways to extract information on structure and  function and their interaction in the Blue Brain Project reconstruction on the neocortical column. The parameter {\bf size} is a natural parameter associated to any graph and is closely related to firing rate in neuroscience. However, most of the parameters we tested were never examined in a neuroscientific context. Our aim was to investigate which parameters may prove useful in classification of binary dynamics without making any assumptions about their relevance. It is exactly this approach that allowed us to discover that certain spectral parameters perform strongly as selection parameters, while others do not. At the same time a newly introduced topological parameter, ``normalised Betti coefficient'' {\bf nbc} shows strong performance as a feature parameter when tested on neighbourhoods with low selection parameter values, but not on high selection values.}

The primary test of our methods in this paper is done on  data generated by the \href{https://www.epfl.ch/research/domains/bluebrain}{Blue Brain Project} that was also used in \cite{Reimann-abdn} for signal classification by established neuroscience methodology. The data consists of eight families of neuronal stimuli that are injected in a random sequence to the  digital reconstruction of the neocortical column of a young rat. This reconstructed microcircuit  consists of approximately 31,000 neurons and 8,000,000 synaptic connections, and is capable of receiving neuronal signals and responding to them in a biologically accurate manner \cite{Cell_paper}.  We used 60\% of the data to train a support vector machine, and the remaining 40\% for classification. With our methods we are able to achieve classification accuracy of up to 88\%. 

\chg{In this paper we did not attempt to explain the relevance of any of the mathematical concepts we use to neuroscience, as our main aim was to discover and investigate the utility of various concepts. However, in} \cite{Reimann-abdn}  the same dataset is studied by  standard techniques of computational neuroscience combined with the ideas presented in this paper. In particular, it is shown  that an informed choice of neighbourhood improves  classification accuracy  when compared to traditional methods. Interestingly, selection of neighbourhoods that improved performance with the technique presented in \cite{Reimann-abdn}  show reduced performance with the techniques presented in this article, and vice versa. In both projects a classification accuracy of nearly 90\% was achievable, but with  different selection parameters (see Results). This suggests that  considering vertex neighbourhoods as computational units can be beneficial in more than one way.

To further test our methods in different settings, we used the NEST - Neural Simulation Tool \cite{NEST} to generate neuronal networks. This software package simulates network models of spiking neurons using simplified neuron models to allow more flexibility and faster processing speed. We created a collection of eight families of stimuli, but on  random graphs with varying densities, and applied our machinery to that dataset. Here again we obtained classification accuracy of up to 81\%.

Important work on (open) vertex  neighbourhoods was reported recently in \cite{Bianconi}. Our approach is independent of this work and is different from it in a number of ways. Most significantly, we do not study the structure of the entire graph and its dynamical properties by means of its full neighbourhood structure. Instead, we  aim to infer dynamical properties of the graph from a relatively small collection of vertices, selected by certain graph theoretic and topological properties, and their neighbourhoods.

High resolution figures and supplementary material is available at the \href{https://homepages.abdn.ac.uk/neurotopology/neighbourhoods}{Aberdeen Neurotopology Group} webpage. In particular, we included a comprehensive visualization of spectral graph invariants of the Blue Brain Project graph, as well as other types of stochastically generated graphs, animations of some of the background work for this project, and a list of links to software implementing the methodology described in this paper.


\section{Results}
\label{sec:Results}

\chg{We start with a brief description of the mathematical formalism used in this article and our approach to  classification tasks. This is intended to make the section accessible to readers without a strong mathematical background. We then proceed by describing our main data source and the setup and implementation of our experiments.  Following this preparation we present our results, validation experiments, and an application of the same techniques in a different setup.}

\subsection{A brief introduction to the mathematical formalism}

\chg{In this article a \hadgesh{digraph} will always mean a finite collection of vertices (nodes) $V$ and a finite collection of oriented edges (arcs)  $E$. Reciprocal edges between a pair of vertices are allowed, but  multiple edges in the same orientation between a fixed pair of vertices and self-loops  are not allowed. }

\chg{The  fundamental mathematical concept essential for our discussion is that of the  neighbourhood of a vertex in a digraph; Figure~\ref{Fig:clsd-nbd}. Let $\GG$ be a digraph, and let $v_0$ be any vertex in $\GG$. The \hadgesh{neighbours} of $v_0$ in $\GG$ are all vertices that are ``one step away'' from $v_0$, in either direction. The  \hadgesh{neighbourhood} of $v_0$ in $\GG$ is the subgraph of $\GG$ induced by $v_0$ and all its neighbours, which we denote by $N_\GG(v_0)$. The vertex $v_0$ is referred to as the \emph{centre} of its neighbourhood.}

\begin{figure}
\begin{center}
\includegraphics[scale=1]{./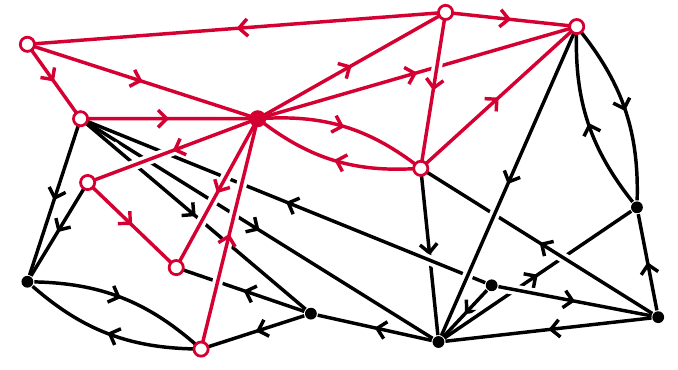}
\end{center}
\caption{A   neighbourhood in a digraph, marked in red, with its centre marked solid colour.}
	\label{Fig:clsd-nbd}
\end{figure}

\chg{Numerical invariants of digraphs can be found in pure and applied graph theory literature, many of those found their uses in theoretical neuroscience (see \cite{Rubinov-Sporns} for a good survey). Some such invariants are used in this article, and a few are introduced here for the first time (e.g. transitive clustering coefficient). Other parameters we used are defined by using topological constructions that arise from digraphs. Such constructions are typically invariant under digraph isomorphism. Standard tools of algebraic topology can then be used to extract numerical invariants of graphs in ways that take emerging higher dimensional structure into account.}

\chg{There are many ways in which one can associate a topological space with a digraph. In this article we use the \hadgesh{directed flag complex}. It is a topological space made out of gluing together \hadgesh{simplices} in different dimensions, starting at 0-simplices (points), 1-simplices (edges), 2-simplices (triangles), 3-simplices (tetrahedra) etc. The $n$-simplices in a directed flag complex associated to a digraph are its directed $(n+1)$-cliques, namely the ordered subsets of vertices $\{v_0, v_1,..., v_n\}$, such that there is an edge from $v_i$ to $v_j$ for all $i < j$. Figure \ref{Fig-clique-complex} shows the directed flag complex associated to a small digraph. The directed flag complex was introduced and used for topologically analysing structural and functional properties of the Blue Brain Project reconstruction of the neocortical columns of a rat \cite{Fund}. \chg{The interested reader may find a comprehensive survey of directed flag complexes and other topological concepts in the Materials and Methods section of that paper.} If $v_0$ is a vertex in $\GG$, we denote by $\Tr_\GG(v_0)$ the directed flag complex of $N_\GG(v_0)$.}

\begin{figure}[h!]
\begin{center}
	\includegraphics[scale=1]{./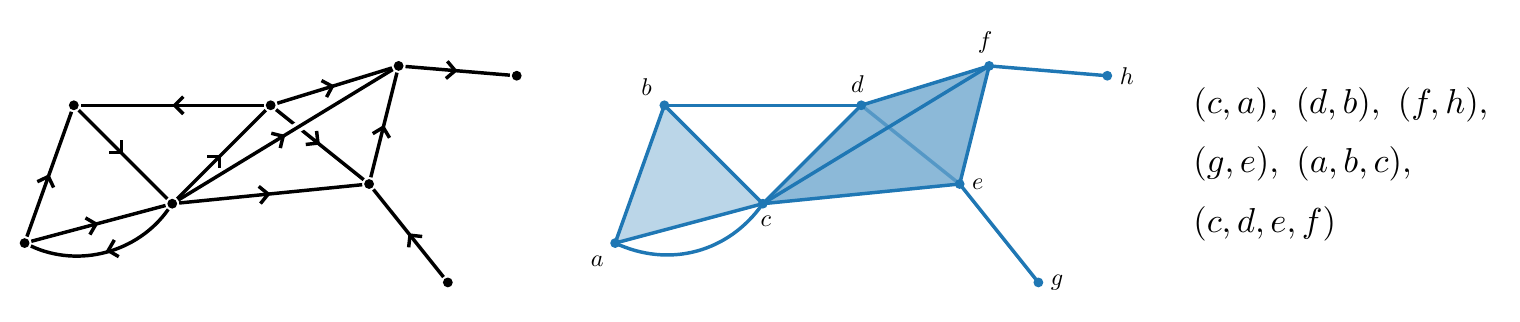}
\end{center}
	\caption{A digraph  (left), the associated directed flag complex  as a topological space (centre), and its maximal cliques  of  (right).}
	\label{Fig-clique-complex}
\end{figure}

\subsection{The classification method}

\chg{We now describe briefly our approach to classification of binary dynamics. For a precise mathematical definition of what we mean by binary dynamics see Methods. The task at hand can be described as follows. We are given a large set of instantiations of binary dynamics on a fixed digraph $\GG$, each of which is labelled by a symbol from some relatively small set. The label of each binary dynamic is unique and known. The aim is to produce a machine learning compatible topological summary for each binary dynamics, so that when the summaries are introduced in a random order, one can train on part of the data with known labels and predict the unknown labels of the remaining part. }

\begingroup
\begin{table}[h!]
\setlength{\tabcolsep}{10pt} 
\renewcommand{\arraystretch}{1} 
\begin{center}\begin{tabular}{l | l }
{\bf Abbreviation} & {\bf Short description}  \\
\hline
\hline
{\bf fcc} &  \parbox{10cm}{Clustering coefficient  (Fagiolo)} \\
\hline
 {\bf tcc} & Transitive clustering coefficient \\
\hline
 {\bf ec} & Euler characteristic  \\
\hline
{\bf nbc} & Normalised Betti coefficient   \\
 \hline
{\bf size} & \parbox{10cm}{Number of  vertices in the graph}  \\
\hline
{\bf asg} & Adjacency spectral gap   \\
\hline
{\bf asr} & Adjacency spectral radius  \\
\hline
{\bf blsg} & Bauer Laplacian spectral gap    \\
\hline
{\bf blsr} & Bauer Laplacian spectral radius   \\
\hline
{\bf clsg} & Chung Laplacian spectral gap    \\
\hline
{\bf clsr} & Chung Laplacian spectral radius    \\
\hline
{\bf tpsg}  & Transition probability spectral gap    \\
\hline
{\bf tpsr} & Transition probability spectral radius   \\
\end{tabular}
\end{center}
\caption{A partial list of the selection and feature parameters examined in this project. See Supplementary Material for additional parameters.}
\label{tab:parameters}
\end{table}
\endgroup

\chg{The \emph{first step} is selection of neighbourhoods. For each vertex $v$ in the digraph $\GG$ we consider its neighbourhood $N_\GG(v)$ and the associated directed flag complex $\Tr_\GG(v)$. We then compute a variety of numerical graph parameters of $N_\GG(v)$ and topological parameters of  $\Tr_\GG(v)$. These parameters are used to create a ranked list of vertices in $\GG$. We then select for each parameter 50 vertices that obtained the top (or bottom) values with respect to that parameter. We now have a set of 50 neighbourhoods corresponding to each parameter. A parameter that is used in this step is referred to as a \emph{selection parameter}, and we denote it by $P$. A short summary of the main parameters we used with their abbreviations  is in Table \ref{tab:parameters}. A detailed description of the parameters  is given in Methods. }

\chg{In the \emph{second step} we introduce binary dynamics in $\GG$. Each instantiation of the dynamics consists of several consecutive time bins (in our experiments we used two, but there is no limitation). For each time bin we consider the neurons that were active and the subgraph that they induce in each of the neighbourhoods we preselected. This gives us, for each selection parameter  and each time bin, a set of 50 subgraphs that correspond to a particular instantiation of binary dynamics on $\GG$. }

\chg{The \emph{third step} is vectorising the data, i.e., a computation of the same graph parameters and topological parameters for each of the subgraphs resulting from the second step. When we use our parameters  in the vectorisation  process  they are referred to as  \emph{feature parameters}, and are denoted  by $Q$. This now gives a vector corresponding to each instantiation of the dynamics, and the pair $(P,Q)$ of selection and feature parameters. }

\chg{The \emph{fourth and final step} is injecting the data into a support vector machine. In this project we used 60\% of the data for training and the remaining for testing. See Figure \ref{Fig:process} for a schematic summary of the process.}

\begin{figure}[h!]
\includegraphics[scale=.5]{./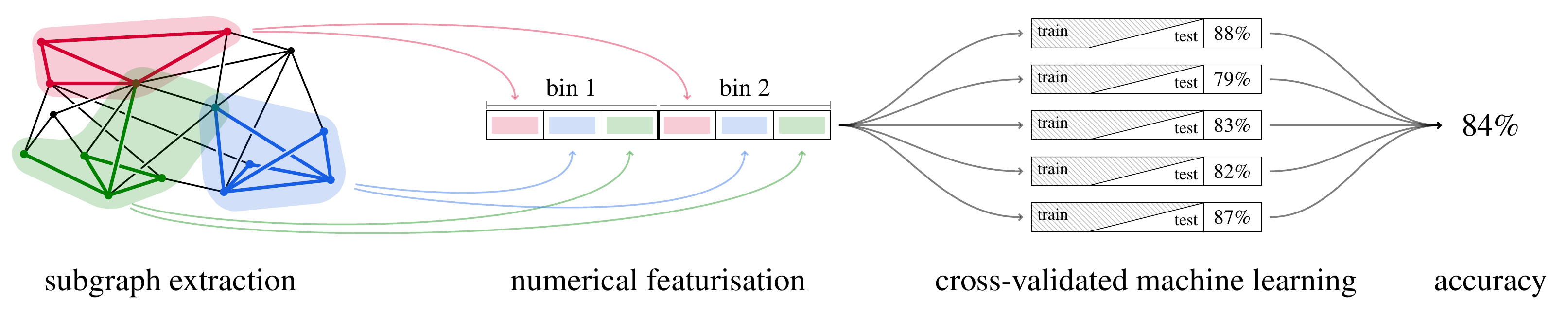}
\caption{A schematic description of the vector summary and classification pipeline.}
\label{Fig:process}
\end{figure}

\chg{We note that the method described here is an example of a much more general methodology that is described in detail in the Methods section of this article. In particular, the graph and topological parameters that we chose to work with are selected from within the abundance of mathematical concepts that arise in graph theory, combinatorics and topology. We do not attempt in this article to associate a neuro-scientific meaning to these parameters. }

\subsection{The data}

\chg{Our main source of data is a simulation that was run on a \href{https://www.epfl.ch/research/domains/bluebrain}{Blue Brain Project} reconstruction of the microcircuitry of the somatosensory cortex in the brain of a rat \cite{Cell_paper}. 
From this model we extract the connectivity of the microcircuit in the form of a digraph} whose vertices correspond to neurons, and with an edge from $v$ to $u$ if there is a synaptic connection from the neuron corresponding to $v$ to the one corresponding to $u$.  We denote the Blue Brain Project digraph by $\GG$. The digraph consists of  31,346 vertices and 7,803,528 edges. The connectivity matrix of this specific circuit, as well as 41 other instantiations of the reconstruction, is accessible on the \href{https://bbp.epfl.ch/nmc-portal/downloads}{Digital Reconstruction of Neocortical Microcircuitry}. 

\chg{The binary dynamics we experimented with consists of eight stimuli families labelled 0-7. For each stimulus a random subset (10\%) of afferent neurons is activated. The stimuli differ with respect to which subset of afferent neurons is activated, where afferents can be shared between stimuli. The probability of a given afferent being associated with two given stimuli is 1\%. In each stimulation time window one and only one stimulus is presented. The stimuli were injected into the circuit in a random sequence of 200 milliseconds per stimulus, and 557 repeats for each stimulus label. The dataset thus consists of  $4456$ binary dynamics functions. The task is to determine the label of that stimulus, i.e. the expected output is an integer from 0 to 7. Thus, the chance level performance would be 12.5\%. More detail on the source of data, biological analysis and an alternative approach to classification of the same data is in \cite{Reimann-abdn}. }

\subsection{Setup}\label{SS:setup}

We computed all the graph parameters \chg{listed in Table \ref{tab:parameters}, as well as additional parameters listed in Supplementary Material}, for all neighbourhoods in the digraph \chg{(see Supplementary Material - Data and Code, for a brief description of computational methods and links to software)}. We fixed a positive integer $M$, \chg{and for each selection parameter $P$ we selected the vertices $v_1, v_2, \ldots, v_M$, whose neighbourhoods $N_\GG(v_1),\ldots, N_\GG(v_M)$  obtained the top  (or bottom) $M$ values of the parameter $P$ (see Step \ref{step2} in Methods). We experimented with $M=20, 50, 100$ and 200.} Here we report on the results we obtained for $M=50$, which provided the highest classification accuracy. For $M=20$ performance was strong as well, but for  $M=100$ and 200 the improvement compared to $M = 50$ was relatively minor, and not worth the additional time and computation needed.

\subsection{Vector summaries}
\label{SS:binning-extraction}
Each binary dynamics  in our dataset has time parameter $t$ between $0$ and $200$ milliseconds. The  subinterval  $[0,60]$ is where almost all the \chg{spiking activity is concentrated across the interval.} Furthermore, the bulk of the stimulus is injected in the first 10ms. Since we aimed to classify the response to the stimulus rather than the stimulus itself,  we chose $\Delta = [10,60]$ and \chg{divided that interval into two 25ms subintervals}, as  experimentation showed that these choices provide the highest classification accuracy \chg{(see Step \ref{step1} in Methods)}.

\chg{We denote each instantiation of binary dynamics on $\GG$  by $B^n$, for $n=1, \ldots, 4456$. Each instantiation consists of two binary states $B^n_1, B^n_2$, corresponding to the neurons that fired in each of the 25ms subintervals. For each selection parameter $P$, and each of the corresponding neighbourhoods $N_\GG(v_m)$, $m=1, \ldots, 50$, we computed the subgraphs $N_{m,k}$ of $N_G(v_m)$ induced by the binary state $B^n_k$, that is, the subgraph induced by the neurons that fired in the given interval. This gave us, for each binary dynamics  $B^n$ and each graph parameter $P$,  a $2\times 50$  matrix $U_n^P$  of subgraphs of $\GG$, whose $(m,k)$ entry is $N^n_{m,k}$. (see Step \ref{step2} in Methods)}.

Finally, for each graph parameter $Q$ (from the same list of parameters) we applied $Q$ to the  \chg{entries of the matrix $U_n^P$} to obtain a numerical feature matrix $U_n^{P,Q}$ corresponding to the binary dynamics function $B^n$, the selection parameter $P$,  and the feature parameter $Q$. \chg{The matrix $U_n^{P,Q}$ is a vector summary of the binary dynamics $B^n$. (see Step \ref{step3} in Methods).}

\subsection{Classification}
For each pair of graph parameters $(P,Q)$ the  vector summaries $\{U_n^{P,Q}\}$  were  fed into a support vector machine (SVM) algorithm. \chg{Our classification pipeline was implemented in Python using the \texttt{scikit-learn} package and the SVC implementation therein. The SVC was initialised with default settings and we used a 60/40 train/test split. The kernel used was Radial Basis function. We used one-versus-one approach for multiclass classification.} For cross-validation we used standard 5-fold cross-validation in \texttt{scikit-learn}.
The results are presented in Figure \ref{Fig:classification}.

\begin{figure}[h!]
\includegraphics[scale=.45]{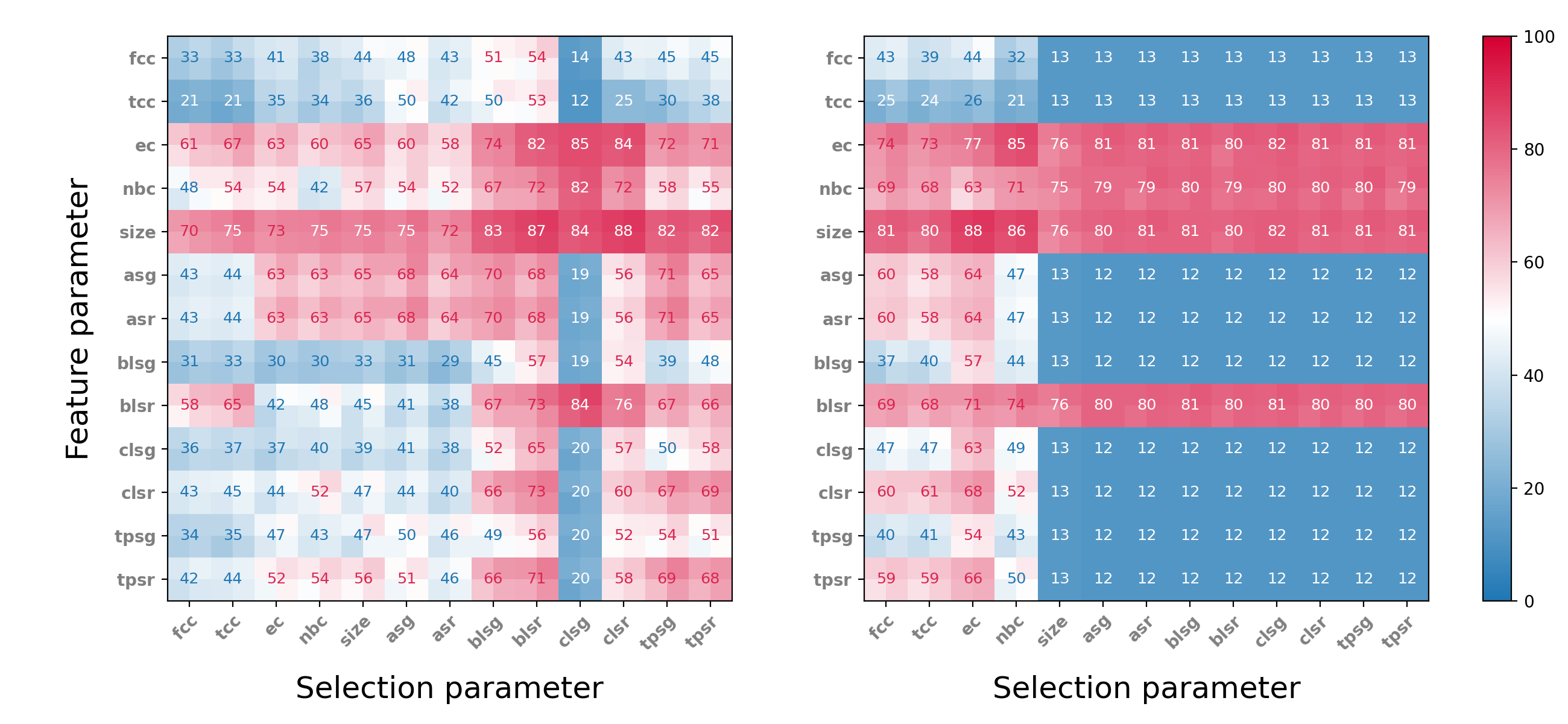}
\caption{\chg{Results of 8 stimuli classification experiments. Range of cross-validated accuracy is indicated by four smaller squares in each square. }Left: Classification accuracy selecting the 50 neighbourhoods with highest parameter value. Right: Classification accuracy selecting the 50 neighbourhoods with lowest parameter value. Compare with Supplementary Figure 3.}
\label{Fig:classification}
\end{figure}

\chg{For each of the selection parameters we tested, we considered both the  neighbourhoods that obtained the top 50 values and those that obtained the bottom 50 values.  In all the experiments, four parameters gave  markedly better performance when used as feature parameters than all other parameters: Euler characteristic ({\bf ec}), normalised Betti coefficient ({\bf nbc}), {\bf size} and  Bauer Laplacian spectral radius ({\bf blsr}).  All four perform significantly better than other feature parameters when the neighbourhoods were selected by bottom value parameters. With respect to top value selection parameters, {\bf ec} and {\bf size}, performed well, while {\bf nbc} and {\bf blsr} were significantly weaker as feature parameters, except when coupled with Chung Laplacian spectral gap ({\bf clsg}). }The neighbourhoods selected by top values of selection parameters gave best results when the selection parameter was one of the spectral graph invariants, while selecting by bottom value of selection parameters, the two types of clustering coefficients ({\bf cc} and {\bf tcc}) and Euler characteristic ({\bf ec}) performed best. 

Interestingly, the two best performing feature parameters, Euler characteristic and size, gave good results across all selection parameters, and performed almost equally well, regardless of whether the neighbourhoods were selected by top or bottom selection parameter value. This suggests that, at least  in this particular network, the choice of feature parameter plays a much more important role in classification accuracy than any specific selection parameter. On the other hand, examining the rows of the best performing feature parameters, in Figure  \ref{Fig:classification}, we see a difference of up to 27\% (top {\bf ec}), 40\% (top {\bf nbc}) and 18\% (top {\bf  size}) in classification accuracy, depending on which selection parameter is used, suggesting that, \chg{within a fixed choice of a feature parameter}, the selection parameter may play an important role in the capability of the respective neighbourhoods to encode binary dynamics. \chg{Note that randomly classifying the 8 stimuli gives an accuracy of 12.5\%.}

\subsection{Validation}
\label{sec:validation}
In order to validate our methods, we created  five experiments, the results of which we then compared to a subset of the original tests. In each case we retrained the SVM algorithm and then retested.

A motivating idea in neuroscience in general, and in this work in particular, is that structure is strongly related to function. Our approach, using neighbourhoods sorted by graph \chg{parameters} and  using the same graph \chg{parameters} as feature parameters is proposed in this article as a useful way of \chg{discovering combinations of parameters that achieve good} classification results of binary dynamics. To test the validity of this proposal, we challenged our assumptions in five different ways, as described below.

\subsubsection{Random selection.}
In this simple control experiment we test the significance of the selection parameter by comparing the results to a random choice of \chg{50 vertices and performing the same vector summary procedure on their neighbourhoods.} Twenty iterations of this experiment were performed, and  the results for each feature parameter were compared to the outcome for the same feature parameter and the  selection parameter with respect to which this feature parameter performed best. The results are described in Figure \ref{Fig:comparison-random}. 

\begin{figure}[h!]
\includegraphics[scale=.5]{./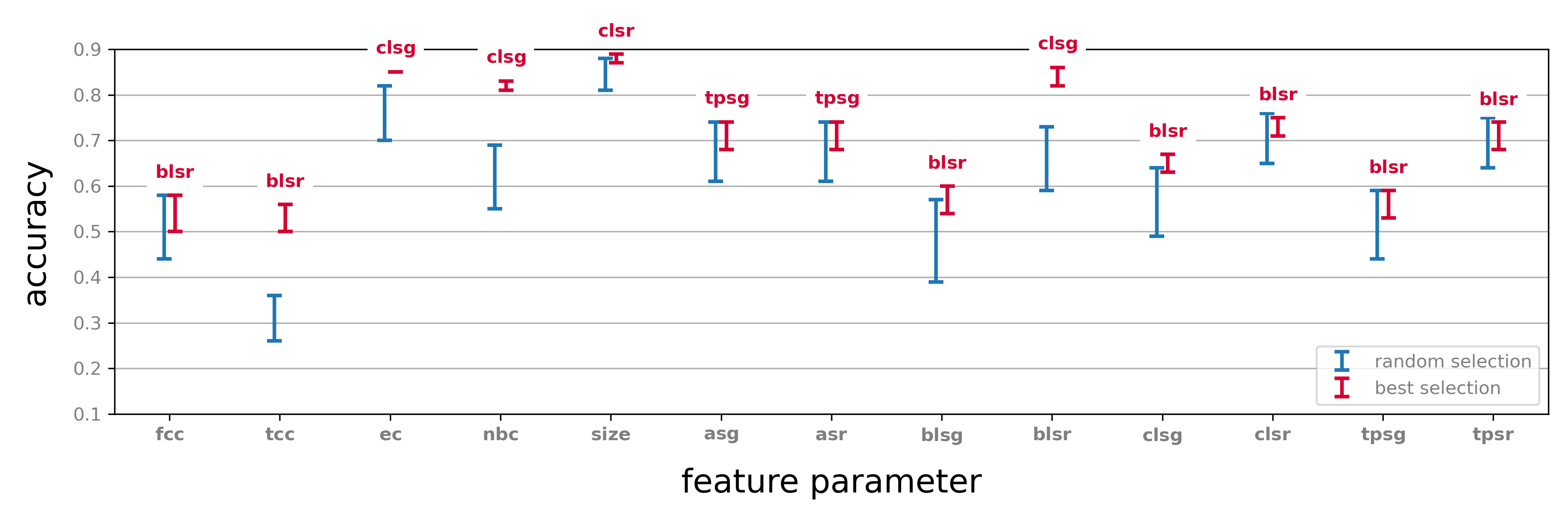}
\caption{\chg{The classification performance based on the neighbourhoods of 50 randomly selected vertices (blue), compared   to the performance of neighbourhoods selected by graph \chg{parameters} with respect to a selection of  feature parameters (red). Errors bars indicate range over 20 iterations. Labels on the red error bars indicate the selection parameter that performed best with respect to the indicated feature parameter. } Compare with Supplementary Figure 2.}
\label{Fig:comparison-random}
\end{figure}

We observe that in almost all cases reported here a choice of neighbourhoods determined by a selection parameter outperforms a random choice (in some cases marginally). We also note that in all those cases the performance of a choice informed by one of these selection parameters exhibits a more consistent behaviour in terms of classification accuracy. This can be seen from the considerably larger error bars in the case neighbourhoods are selected at random. On the other hand, for some feature parameters a random choice does not seem to be a disadvantage, even compared to the best selection parameter with respect to this feature parameter (Supplementary Figure 3). This suggests that while   selection and generation of vector summary by objective parameters are advantageous, experimentation is generally necessary in order to decide which parameters best fit the classification task.

\subsubsection{Neighbourhood vs. centre.}
A working hypothesis in this paper is that neighbourhoods carry more information about a binary dynamics than individual vertices. We examined for each selection of 50 neighbourhoods by a graph parameter, as described above, the classification capability of the centres of these neighbourhoods. \chg{Specifically, this experiment is identical to the original classification experiment, except for each selection parameter $P$ the two rows of the corresponding feature matrix  have binary values, where the $j$-th entry in row $i$ is set to be 1 if the $j$-th neuron in the sorted list fired in the $i$-th time bin at least once and 0 otherwise. }
These feature vectors were then used  in the classification task using the same train and test methodology. For each of the selection parameters we tested, we considered both the top 50 and the bottom 50 neurons in the corresponding sorted list. 

The results of this experiment were compared with the original experiments, and are shown in Figure \ref{fig:plot-val2}. We note that in all cases a very significant drop in performance occurs. Interestingly, some vertices in the top 50 of a sorted list show classification accuracy that is far better than random, while the bottom 50 give performance comparable to random (for example, {\bf fcc}). In some cases however, the bottom 50 vertices give better performance than the top 50. This suggests that the selection parameters play a role in classification accuracy even before considering the activity in the neighbourhood of a vertex. 

\chg{We also note that for almost all top valued selection parameters recorded in Figure  \ref{fig:plot-val2} and some of the bottom valued ones, the classification performance using the centre alone is significantly better than random. This observation reinforces the idea that selection parameters inform on the capability of neurons to inform on activity.}

\begin{figure}[h!]
	\includegraphics[width=\textwidth]{./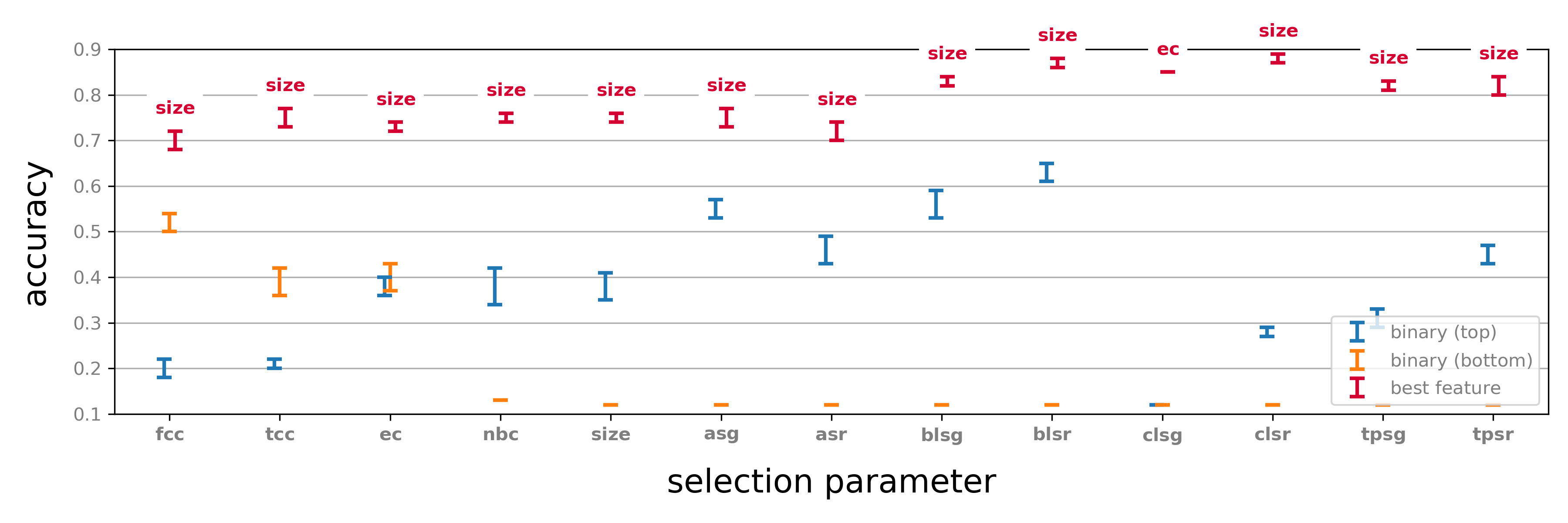}
	\caption{Classification results by binary  vectors using only the centres of each of the top and bottom 50 neighbourhoods for each parameter. For comparison, the performance for each selection parameter classified by the highest performing feature parameter is included.}
	\label{fig:plot-val2}
\end{figure}

\subsubsection{Neighbourhoods vs. arbitrary subgraphs.}
For each selection parameter we considered the degrees of the 50 selected centres. For a centre $v_i$ of degree $d_i$ we then selected at random $d_i$ vertices in the ambient graph and considered the subgraph induced by those vertices and the centre $v_i$. We used these 50 subgraphs in place of the original neighbourhoods. In this way we create  for each centre a new subgraph with the same vertex count as the original neighbourhoods that is unrelated to the centres in any other controllable way. We extracted feature vectors using these subgraphs for each of the selection parameters and repeated the classification experiment. The results were compared to the original results with respect to the strongest performing feature parameter. Notice that these are always either {\bf ec} or {\bf  size}, both of which can be applied to an arbitrary digraph, not necessarily a neighbourhood.

\begin{figure}[h!]
	\includegraphics[width=\textwidth]{./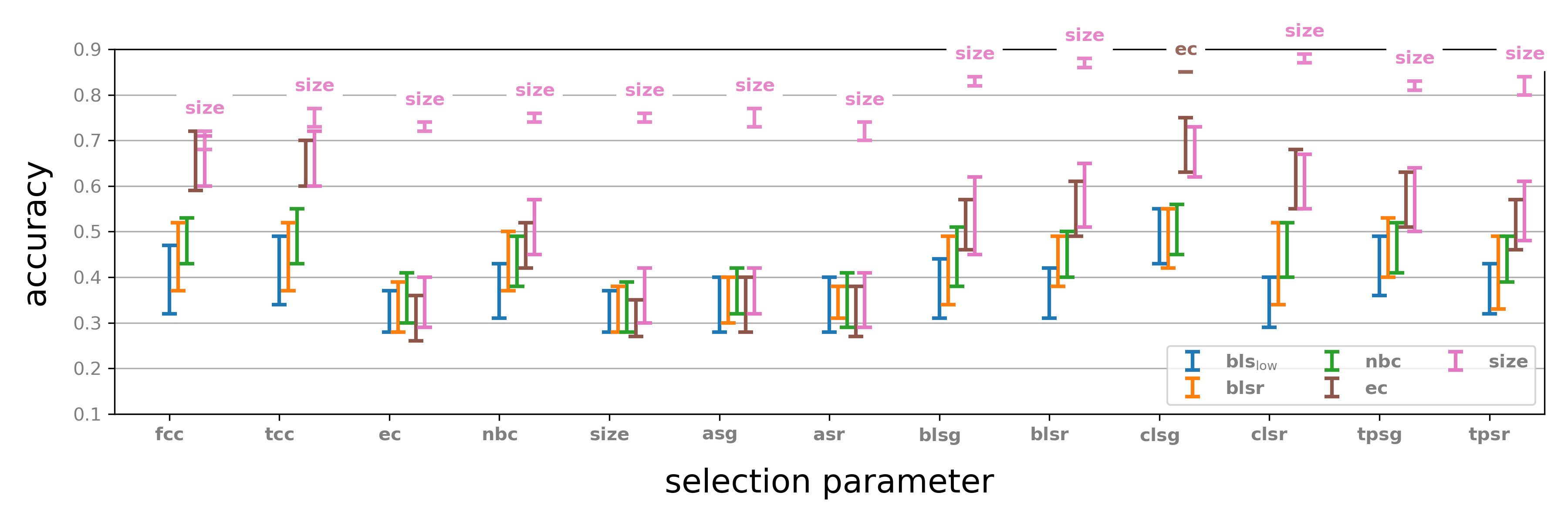}
	\caption{Classification by subgraphs of the same vertex count as the neighbourhoods selected by the specified selection parameters. The results of classification by the highest performing feature parameters are above each of the columns.}
	\label{fig:plot-val3.2}
\end{figure}

The results of this experiment were compared with the original experiments, and are shown in Figure \ref{fig:plot-val3.2}. There is a clear drop in performance for all selection parameters except {\bf fcc} (Fagiolo's clustering coefficient; See Methods). Furthermore, classification using these subgraphs shows considerably larger error bars. This suggests that using neighbourhoods with our methodology is advantageous. One explanation for this may be the tighter correlation of activity among neurons in a neighbourhood, compared to an arbitrary subgraph of the same size in the network, but we did not attempt to verify this hypothesis.

\subsubsection{Fake neighbourhoods.}
In this experiment we considered for each centre its degree and selected at random the corresponding number of vertices from the ambient graph. We then modified the adjacency matrix of the ambient graph so that the centre is connected to each of the vertices selected in the appropriate direction, so as to preserve the centre's  in- and out-degree. Computationally, this amounts to applying a random permutation to the row and the column of each of the centres. The result is a new ambient graph, where the old centres are now \chg{centres of}  new neighbourhoods. We extracted feature vectors using these ``fake neighbourhoods'' and repeated the classification experiment. The results were compared with the original classification. The outcome is illustrated in Figure \ref{fig:plot-val3.1}.

\begin{figure}[h!]
	\includegraphics[width=\textwidth]{./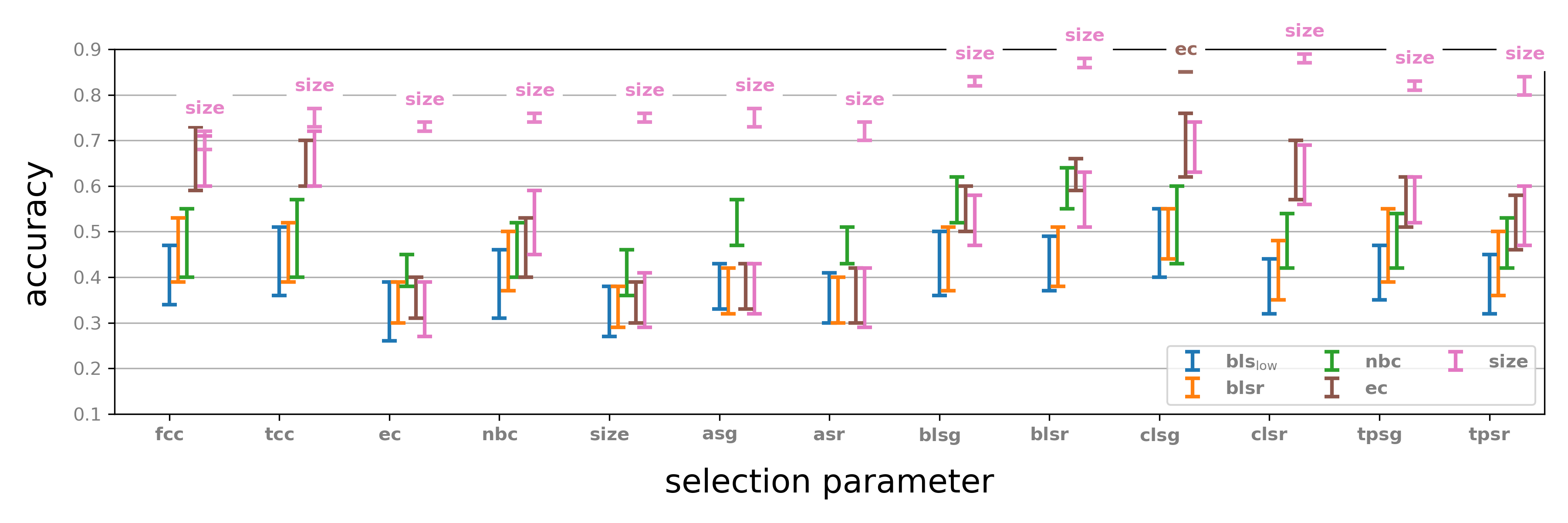}
	\caption{Classification by ``fake neighbourhoods'': Original classification with respect to best performing feature parameter is given for comparison.}
	\label{fig:plot-val3.1}
\end{figure}

We note that with respect to almost all selection parameters  there is a significant drop in performance resulting from this modification.  The one exception is {\bf fcc}, where {\bf ec} as a feature parameter actually sometimes gives slightly better results, but with a large error bar. It is interesting that the results are similar for some of the parameters to those observed in \chg{previous experiment (Figure \ref{fig:plot-val3.2})}, but quite different for others. However, the drop in performance is similar in both cases. We make no hypothesis attempting to explain these observations.

\subsubsection{Shuffled activity.}  In this experiment we applied a random permutation $\sigma$ of the neuron indices in the Blue Brain Project microcircuit, so that neuron  $\sigma(i)$ now receives the  spike train (sequence of spikes) of neuron $i$ for each stimulus. That is, we precompose the binary dynamics with $\sigma$ to get a new binary dynamics, which still appears in eight varieties, since the operation of permuting the neuron indices is bijective. In other words,  we can reconstruct the original activity from the shuffled activity by applying the inverse permutation $\sigma^{-1}$.  The same selection and feature parameters were used and the resulting data was used for training and testing. The  results are shown in Figure \ref{fig:plot-val4}.

\begin{figure}[h!]
	\includegraphics[width=\textwidth]{./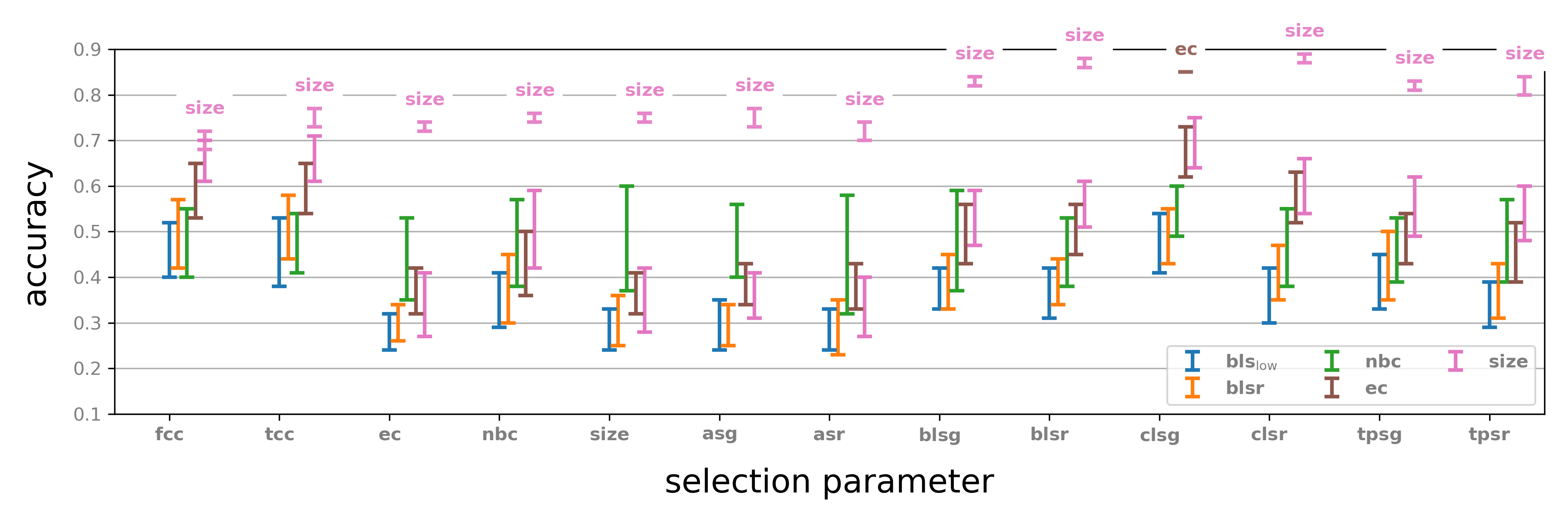}
	\caption{Classification of shuffled binary dynamics functions and comparison to the top results for the original dynamics. }
	\label{fig:plot-val4}
\end{figure}

We observe again that there is a significant drop in performance resulting from this shuffling. This is quite surprising since the shuffled activity spike train should give eight families of stimuli that carry some sort of internal resemblance, and since we retrained and tested with these stimuli, one could expect that the classification results will be comparable to those of the original experiments. That not being the case suggests that structure and function in the Blue Brain Project reconstruction are indeed tightly related.

\subsection{Testing the method on an artificial neuronal network}
To test our methods in a non-biological binary  dynamics setting, we conducted a set of experiments with the NEST simulator \cite{NEST}. The NEST software simulates spiking neuronal network models and offers a vast simplification of neuronal networks that are based on the exact morphology of neurons (such as the Blue Brain Project reconstructions). It also provides great flexibility in the sense that  it allows any connectivity graph to be implemented in it and any initial stimulation to be injected into the system with the response modulated by various flexible parameters. 
 
 \begin{figure}[h!]
\includegraphics[width=1\textwidth]{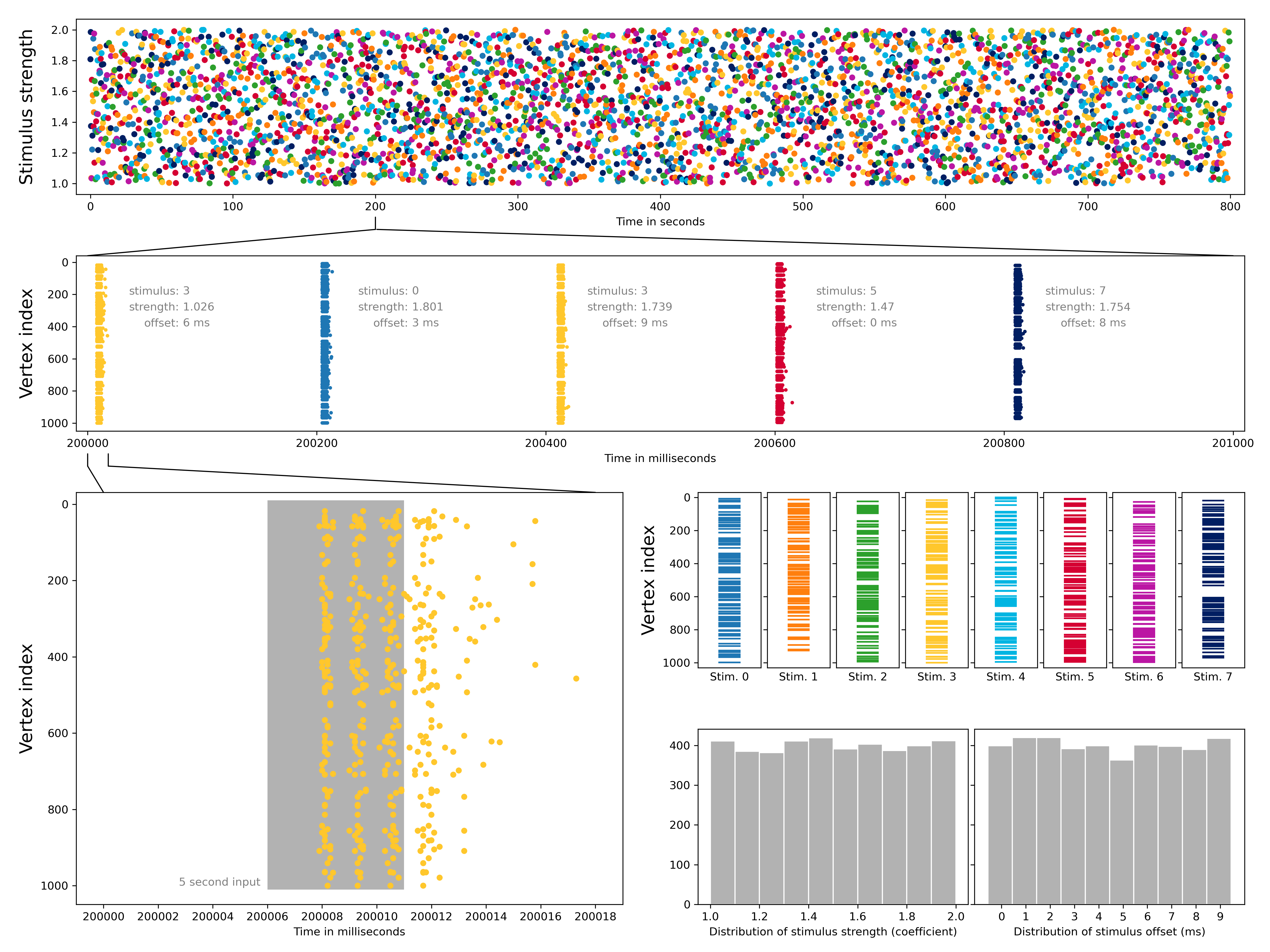}
\caption{\chg{Eight types of input stimuli for Erdős–Rényi random digraphs, executed as a single 800 second experiment. Top row: Sequence of stimuli types, 500 of each, and relative strength of input for each stimulus. Second row: Spiking neurons on a 1000 ms interval from the experiment. Bottom left: Spiking neurons and length of external input on a 18 ms interval. Third row right: Random selections of 100 vertices from 1000 vertices, acting as receptors of external input. Bottom row right: Distribution of randomly selected relative strength and input stimulus time offset over the whole experiment.}}
\label{fig:nest-visual}
\end{figure}

To move as far as possible from a strict biological setup, we generated a number of Erd\H{o}s--R\'enyi  random digraphs on 1000 vertices, which we implemented on NEST. We then created 8 distinct stimuli, each enervating a random selection of 100 vertices of the graph. A random sequence of stimuli was then created, with each stimulus type repeated 500 times. Our experiment consisted of injecting the sequence of stimuli into the simulator, for a duration of 5ms, one every 200 ms, \chg{to reduce the influence of one stimulus on the next.} To introduce some randomness, the start time of each stimulus is randomly selected from the first \chg{10ms} , the strength of each stimulus is multiplied by a random number between 1 and 2, and background noise is included (using NEST's \texttt{noise\_generator} device with strength 3). \chg{For each 200ms interval, the first 10ms were not included in the classification.  As a result some of the input may be included in the classified data, but never more than 4 ms, and for approximately 60\% of the 4000 stimuli the input is completely excluded from classification.} The code used to create these experiments is \href{https://github.com/jlazovskis/neurotop-nest/}{available online}, and the experiments are presented visually in Figure \ref{fig:nest-visual}. 

The spikes from this simulation were then extracted and were run through the same pipeline as the Blue Brain Project data. We experimented with graph densities of $0.08$, $0.01$ and $0.005$, and with selections of 10, 20, and 50 neighbourhoods. Figure \ref{fig:NEST} shows the performance by  the  selection parameters from Table \ref{tab:parameters}. Size was used in all cases as a feature parameter. The best performance was obtained with 50 neighbourhoods, with graph density of 0.01 in almost all selection parameters. The results of experiments with all parameters can be seen in Supplementary Figure 5.

\begin{figure}[h!]
\includegraphics[width=\textwidth]{./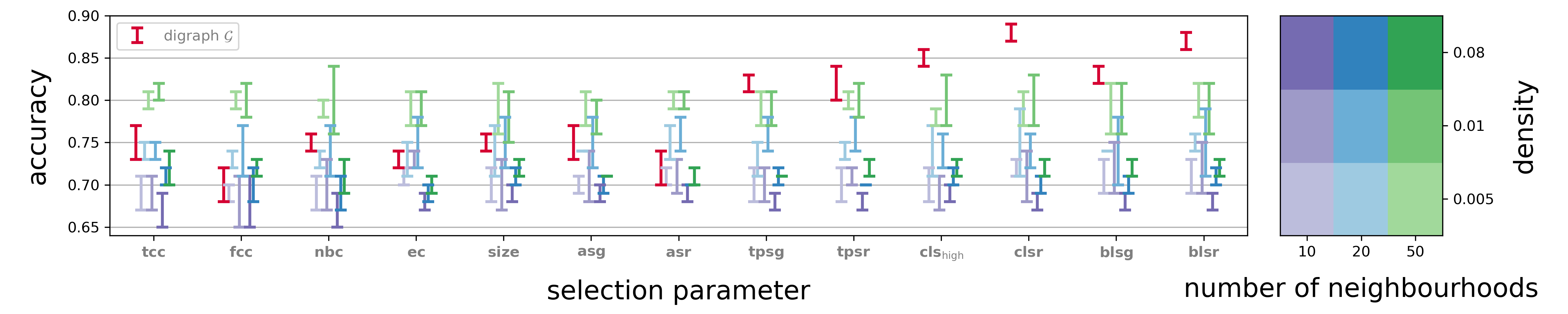}
\caption{Classification of eight random signals on an Erd\H{o}s--R\'enyi random digraph on 1000 vertices and connection probabilities of 8\%, 1\% and 0.5\% and selection of 10, 20, and 50 neighbourhoods, modelled on a NEST simulator. Selection parameters are the same as in the main example and feature parameter is always \textbf{size}. Graph $\GG$ means the BBP graph and its performance with respect to  \textbf{size} as feature parameter is given for comparison. Compare with Supplementary Figure 5.}
\label{fig:NEST}
\end{figure}

\chg{Interestingly, the middle graph density of $0.01$ consistently performed equally as well or better than both the denser $0.08$ and less dense $0.005$ across all feature parameters, except neighbourhood size ({\bf size}) and adjacency spectral gap ({\bf asg}). Another interesting observation is that the strongest selection parameter in this experiment turns out to be normalised Betti coefficient ({\bf nbc}), or transitive clustering coefficient ({\bf tcc}), depending on if ``strongest'' is taken to mean with the highest individual accuracy or with the highest average accuracy from cross-validation, respectively. Both of these selection parameters in the Blue Brain Project experiments exhibited rather mediocre performance (see Figure \ref{Fig:classification}, left).} This suggests that different networks and binary dynamics on them may require experimentation with a collection of selection (and feature) parameters, in order to optimise the classification accuracy.

\section{Discussion}
\label{sec:summary}

In this paper we examined the concept of a closed  neighbourhood in relation to the classification of binary dynamics on a digraph.  \chg{Regardless of what the source of the binary dynamics is, but with the assumption that it is given in a time series of labelled instantiations, we ask  how can the  dynamics be read off and classified. In the context of neuroscience, which is our primary motivation for this study, this is a question on the boundary between computational neuroscience and machine learning. Our methods provide a method of addressing this question. }

We proposed a methodology that will take as input binary dynamics on a digraph and produce a vector summary of the dynamics by means of combinatorial and/or topological \chg{parameters} of a relatively small number of neighbourhoods. Using this methodology we experimented with a dataset implemented on the Blue Brain Project reconstruction of the neocortical column of a rat, and on an artificial neural network with random underlying graph implemented on the NEST  simulator. In both cases the vector summaries were then run through a support vector machine algorithm that was able to achieve a classification accuracy of up to 88\% for the Blue Brain Project data and up to 81\% for the  NEST data. 

We used the same parameters both for selecting neighbourhoods and for the creation of feature vectors. We saw that certain spectral graph parameters used as selection parameters perform significantly better than more classical parameters such as degree and clustering coefficients. We also observed that the parameters that performed best as feature parameters were the simplest ones, namely \hadgesh{size} and \hadgesh{Euler characteristic}. Comparison to randomly selected neighbourhoods showed that the methodology works reasonably well even without selecting the neighbourhoods in an informed way, but that neighbourhoods selected in a way informed by graph parameters gives in general a better performance with a  much smaller error range.

\chg{Our aim was to demonstrate that certain selections of subgraphs, informed by objective structural parameters,   carry enough information to allow classification of noisy signals  in a network of spiking neurons. In this paper the subgraphs selected are closed neighbourhoods, and the selection criteria are our chosen selection parameters. We did not however show, or attempted to demonstrate, that the use of neighbourhoods as a concept, or graph parameters as a selection mechanism are the best methodology. The same techniques could be applied to other subgraph selections  and other vectorisation methods, which can be analysed by our pipeline with relatively small modifications. }

Another aspect of our ideas that was not exploited at all in this project is the use of more than a single graph parameter in the selection procedure. We did show that different parameters are distributed differently in the Blue Brain Project graph, and hence one may hypothesise   that optimising neighbourhood selection by two or more parameters  may give improved classification accuracy.          

\chg{As our aim was not to obtain the best classification, but rather to provide a good methodology for ingesting binary dynamics  on a digraph and  producing machine learning digestible data stream, we did not experiment with other more sophisticated machine learning algorithms. It is conceivable that doing so may produce even better classification accuracy than what is achieved here.}

\chg{Finally, our approach is closely related to graph neural networks where convolution is performed by aggregating information from neighbourhoods, i.e. for every vertex, features are learned from all the adjacent vertices. The pipeline presented in this paper also takes as input sequences of neural firings and sequences of neuron assemblies which turn the firing patterns into feature values. The interaction of our work and the modelling perspectives from graph neural networks and sequence-to-sequence learning might thus pose an interesting future research question.}

\newpage
\section{Methods}

\subsection{Mathematical Concepts and Definitions}
\label{Section:Defs}
We introduce the basic concepts and notation that are used throughout this article. By a \hadgesh{digraph} we   mean a \hadgesh{finite, directed simple graph}, that is, where reciprocal edges between a pair of vertices are allowed, but  multiple edges in the same orientation between a fixed pair of vertices and self-loops  are not allowed. 

\chg{Topology is the study of topological spaces - a vast generalisation of geometric objects. In this paper we only consider spaces that are built out of simplices. Simplices occur in any dimension $n\geq 0$, where a 0-simplex is a point, a 1-simplex is a line segment, a 2-simplex is a triangle, a 3-simplex a tetrahedron and so forth in higher dimensions. Simplices can be glued together to form a topological space. A good survey for this material intended primarily for readers with a neuroscience background can be found in the Materials and Methods section of \cite{Fund}.}

We now describe a general setup that associates a family of  topological objects with a digraph.  A particular case of this setup is the main object of study in this paper. 

\begin{Defi}\label{Def-Top-Op}
A \hadgesh{topological operator on digraphs} is an algorithm that associates with a digraph $\GG$ a topological space $\Gamma(\GG)$, such that if $\HH\subseteq\GG$ is a subgraph then $\Gamma(\HH)\subseteq \Gamma(\GG)$ as a closed subspace. 
\end{Defi}

That is, a topological operator on digraphs is a functor from the category of digraphs and digraph inclusions to the category of topological spaces and inclusions. The flag complex of $\GG$ (ignoring orientation), the directed flag complex \cite{Flagser}, and the flag tournaplex \cite{GLS}  are examples of such operators.  

\begin{Defi}\label{Def-directed-nbd-community}
Let $\GG = (V,E)$ be a digraph, and let $v_0\in V$ be any vertex. 
\begin{itemize}
\item The \hadgesh{neighbours of $v_0$ in $\GG$} are all vertices $v_0\neq v\in V$ that are incident to $v_0$. 
\item The \hadgesh{open neighbourhood of $v_0$} is the subgraph of $\GG$ induced by the neighbours of $v_0$ in $\GG$. The \hadgesh{closed neighbourhood of $v_0$ in $\GG$} is the subgraph induced by the neighbours of $v_0$ and $v_0$ itself. 
\end{itemize}
We denote the open and closed neighbourhoods of $v_0$ in $\GG$ by \hadgesh{$N_\GG^\circ(v_0)$} and  \hadgesh{$N_\GG(v_0)$} respectively. More generally:
\begin{itemize}
\item  Let $S\subseteq V$ be a subset of vertices. Then $N_\GG^\circ(S)$ denotes the union of open  neighbourhoods  of all $v\in S$. Similarly  $N_\GG(S)$ is the union of all closed neighbourhoods of vertices $v\in S$.
\end{itemize}
\end{Defi}

Notice that if $S=\{v_0, v_1\}$, and $v_0$ and $v_1$ are incident in $\GG$, then $N_\GG^\circ(S) = N_\GG(S)$. In this paper we will  mostly consider  closed neighbourhoods. Neighbourhoods are also used in the paper \cite{Reimann-abdn}, which is closely related to  this article.

\begin{Term}\label{Def-S-tribe}
Let $\GG$ be a digraph and let $S$ be a subset of vertices in $\GG$. Unless explicitly stated otherwise, we shall from now on refer to the closed neighbourhood  of $S$ in $\GG$ simply as the \hadgesh{neighbourhood of $S$ in $\GG$}. In the case where $S$ contains a single vertex $v_0$, we will refer to $v_0$ as the \hadgesh{centre} of $N_\GG(v_0)$. 
\end{Term}

\begin{figure}
	\label{tribe}
\includegraphics[scale=1]{./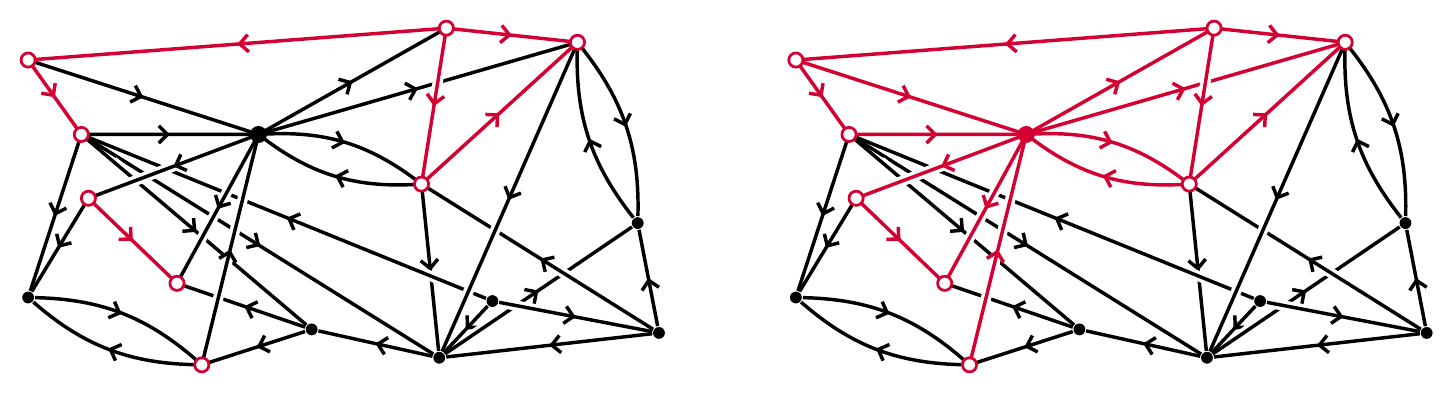}
\caption{An open neighbourhood (left) and a closed neighbourhood (right) in a digraph, marked in red, with its central vertex marked solid colour.}
\end{figure}

The topological operator we consider in this article is the directed flag complex of a digraph which we recall next. See Figure \ref{Fig-clique-complex} for an example.

\begin{Defi}\label{Def-dfl}
A \hadgesh{directed $n$-clique} is a digraph, whose underlying undirected graph is an $n$-clique, and such that the orientation of its edges determines a linear order on its  vertices. An \hadgesh{ordered simplicial complex} is a collection $X$ of finite ordered sets that is closed under subsets. The $n$-simplices of an ordered simplicial complex $X$ are the sets of cardinality $n+1$.  If $\GG$ is a digraph, then the \hadgesh{directed flag complex} associated to $\GG$ is the ordered simplicial complex whose $n$-simplices are the directed $(n+1)$-cliques in $\GG$. We  denote the directed flag complex of a digraph $\GG$ by \hadgesh{$|\GG|$}.
\end{Defi}

\subsection{Encoding binary dynamics on neighbourhoods}
\label{Sec:Tribal-encoding}
We now describe our approach to classification of binary dynamics on a graph in  general terms.  

\begin{Defi}
Let $\GG = (V,E)$ be a graph (directed or undirected). A \hadgesh{binary state} on $\GG$ is  a function $\beta \colon V\to \{0,1\}$. Equivalently, a binary state on $\GG$ is a partition of $V$ into two disjoint subsets that correspond to $\beta^{-1}(0)$ and $\beta^{-1}(1)$, or alternatively as a choice of an element of the power set $\PP(V)$ of $V$. A \hadgesh{binary dynamics} on $\GG$ is a function $B\colon\R_{\ge 0}\to \PP(V)$ that satisfies the following condition:
\begin{itemize}
\item There is a partition of $\R_{\ge 0}$ into finitely many half open intervals $\{[a_i, b_i)\}_{i=1}^P$ for some $P\geq 1$, such that $B$ is constant on $[a_i, b_i)$, for all $i=1,\ldots, P$.
\end{itemize}
\end{Defi}
 Activity in a network of neurons, both natural and artificial, is a canonical example of a binary dynamics on a directed network. 

\subsubsection{Setup.} The task we address in this section is a general classification methodology for binary dynamics functions. Namely, suppose one is  given 
\begin{itemize}
\item a  set of binary dynamics functions $\{B_i\;|\; i\geq 1\}$ on a fixed ambient graph $\GG$,  
\item a set of labels $\LL=\{L_1, L_2,\ldots, L_n\}$, and 
\item a labelling function $L\colon \{B_i\;|\; i\geq 1\}\to \LL$. 
\end{itemize}
In addition, we operate under the assumption that \hadgesh{functions labeled by the same label are variants of the same event} (without specifying what the event is, or in what way its variants are similar). The aim is to produce a topological summary for each $B_i$ in a way that will make the outcome applicable to standard machine learning algorithms. We next describe our proposed mechanism.

\subsubsection{Creation of  vector summary} 
\label{SSec:topological-summary}
Fix a graph $\GG$ and a real-valued  graph \chg{parameter} $Q$, that is, a real-valued function taking digraphs as input and whose values are invariant under graph isomorphisms.  Suppose that a set of labeled binary dynamics functions $\{B^n\}_{n=1}^N$ on $\GG$ is given. Select  an $M$-tuple   $(\HH_1, \HH_2,\ldots, \HH_M)$ of subgraphs of $\GG$, for some fixed positive integer $M$. 

Fix a time interval and divide it into time bins. In each bin, record the vertex set that showed the value 1, that is, was \hadgesh{active} at some point during that time bin. For each $1\le m\le M$, restrict that set to  $\HH_m$ and record the subgraph induced by the active vertices. Apply $Q$ to obtain a numerical $M$-tuple, and  concatenate the vectors into a long vector, which encodes all time bins corresponding to the given dynamics. 

We now describe the procedure more accurately in  three steps.
\begin{enumerate}
	\item[{\crtcrossreflabel{I)}[step1]}] \underline{Interval partition uniformising.} Fix an interval $I=[a,b]\subset\R_{\geq 0}$ and a positive integer $K$. Let $\Delta = \frac{b-a}{K}$. For $1\le k\le K$, let $I_k$ denote the sub-interval
	\[I_k\defeq [a+(k-1)\Delta, a + k\Delta]\subseteq [a,b].\] 
	
	\item[{\crtcrossreflabel{II)}[step2]}] \underline{Subgraph extraction.} For $1\le n\le N$ and each $1\le m\le M$, let $\beta_{m,k}^{n}$ denote the binary state on $\HH_m$ defined by
\[\beta^{n}_{m,k} \defeq \{v\in \HH_m\;|\; \exists t\in I_k, \;\text{such that}\; v\in B^n(t)\}.
\]

	Let $\HH^n_{m,k}\subseteq \HH_m$ be the subgraph induced by all vertices in the set $\beta^{n}_{m,k}$. We refer to $\HH^n_{m,k}$ as the \hadgesh{active subgraph} of $\HH_m$ with respect to the binary dynamics function $B^n$.

	\item[{\crtcrossreflabel{III)}[step3]}] \underline{Numerical featurisation.} For each $1\le n\le N$, let $q^n_{m,k}$ denote the value of  $Q$ applied to  $\HH^n_{m,k}$. Let $F^n$  denote the  $M\times K$ matrix corresponding to the binary dynamics function $B^n$, that is $(F^n)_{m,k} = q^n_{m,k}$.
\end{enumerate}

For use in standard machine learning technology such as support vector machines, we turn the output of the procedure into a single vector by column concatenation. The output of this procedure is what we refer to as a \hadgesh{vector summary of the collection $\{B^n\}_{n=1}^N$} (Figure \ref{Fig:process}). 
It allows great flexibility as its outcome  depends on a number of important choices: 
\begin{itemize}
\item the ambient graph $\GG$,
\item the selection procedure of subgraphs,
\item the interval $I$ and the binning factor $K$, and
\item the graph \chg{parameter} $Q$. 
\end{itemize}
All these choices may be critical to the task of classifying binary dynamics functions, as our use case shows, and have to be determined by experimentation with the data.


\subsection{Selection and feature parameters}
\label{sec:parameters}
In this section we describe the graph parameters used in this article. Some of these parameters are well known in the literature. All of them are invariant under digraph isomorphism. The parameters presented in this section are the primary parameters used for both selection and generation of vector summaries. We chose these particular parameters either because of their prevalence in the literature, or for their strong performance as either selection or feature parameters in classification tasks.  \chg{Other parameters we examined  are mentioned in Supplementary Materials.}

Throughout this section, we let $\GG = (V, E)$ denote a locally  finite digraph (that is, such that every vertex is of finite degree). For $k\geq 1$ and $v_0\in V$, we let $\curs{S}_k(v_0)$ denote the number of directed $(k+1)$-cliques that contain $v_0$. In particular $\curs{S}_1(v_0) = \deg(v_0)$.

\subsubsection{Clustering coefficients.}
\label{sec:cc}
In \cite{WattsStrogatz} Watts and Strogatz introduced an invariant for undirected graphs they called \hadgesh{clustering coefficient}.  For each vertex $v_0$ in the graph $\GG$, one considers the quotient of the number $t_{v_0}$ of triangles in $\GG$ that contain $v_0$ as a vertex by the number ${\deg(v_0)\choose 2}$ of triangles in the complete graph on $v_0$ and its neighbourhood in $\GG$. The clustering coefficient of $\GG$ is then defined as the average across all $v_0\in \GG$  of that number. Clustering coefficients are used in applied graph theory as measures of segregation \cite{Rubinov-Sporns}.

\subsubsection{Clustering coefficient for digraphs.} \label{sec:fcc} The Watts--Strogatz clustering coefficient was generalised by  Fagiolo \cite{fagiolo} to the case of directed graphs. Fagiolo considers for a vertex $v_0$ every possible 3-clique that contains $v_0$, and then identifies pairs of them according to the role played by $v_0$, as a source, a sink, or an intermediate vertex (see Figure \ref{Fig:triangles}, (A), (B) and (C)). Fagiolo also considers cyclical triangles at $v_0$ and identifies the two possible cases of such triangles (see Figure \ref{Fig:triangles}, (D)). The Fagiolo clustering coefficient at $v_0$ is thus the quotient of the number of equivalence classes of directed triangles at $v_0$, denoted by $\vec{t}_{v_0}$, by the number of such classes in the complete graph on $v_0$ and all its neighbours in $\GG$. Thus, if $v_0$ is the $i$-th vertex in $\GG$ with respect to some fixed ordering on the vertices, and $A=(a_{i,j})$ is the adjacency matrix for $\GG$, then 
\[\vec{t}_{v_0} \defeq \frac12\sum_{j,k}(a_{i,j} + a_{j,i})(a_{i,k}+a_{k,i})(a_{j,k}+a_{k,j}),\]
and the clustering coefficient at $v_0$ is defined by 
\[C_F(v_0) \defeq \frac{\vec{t}_{v_0}}{\deg(v_0)(\deg(v_0)-1) - 2\sum_j a_{i,j}a_{j,i}}.\]

\subsubsection{Transitive clustering coefficient}\label{sec:tcc} A directed 3-clique is also known in the literature as a \hadgesh{transitive 3-tournament}. Our variation on the clustering coefficient, the \hadgesh{transitive clustering coefficient} of a vertex $v_0$ in a digraph $\GG$, is the quotient of the number of directed 3-cliques in $\GG$ that contain $v_0$ as a vertex by the number of theoretically possible such 3-cliques. 

Let $\indeg(v_0)$ and $\outdeg(v_0)$ denote the in-degree and out-degree of $v_0$. Let $I_{v_0}$, $O_{v_0}$ and $R_{v_0}$ denote the number of in-neighbours (that are not out-neighbours), out-neighbours (that are not in-neighbours) and reciprocal neighbours of $v_0$, respectively. Notice that
\begin{equation}
\indeg(v_0)  = I_{v_0} + R_{v_0}\quad\text{and}\quad \outdeg(v_0) = O_{v_0} + R_{v_0}.
\end{equation} 

We introduce our variation on Fagiolo's clustering coefficient.

\begin{Defi}
\label{Def:lcc}
	Define the \hadgesh{transitive clustering coefficient at $v_0$} by 
	\begin{equation*}
	C_T(v_0) \defeq \frac{\curs{S}_2(v_0)}{\deg(v_0)(\deg(v_0)-1) - (\indeg(v_0)\outdeg(v_0) + R_{v_0})}.
	\end{equation*}
	\label{Def-local-clustering-coefficient}
\end{Defi}

\begin{figure}
\centering
\includegraphics[width=14cm]{./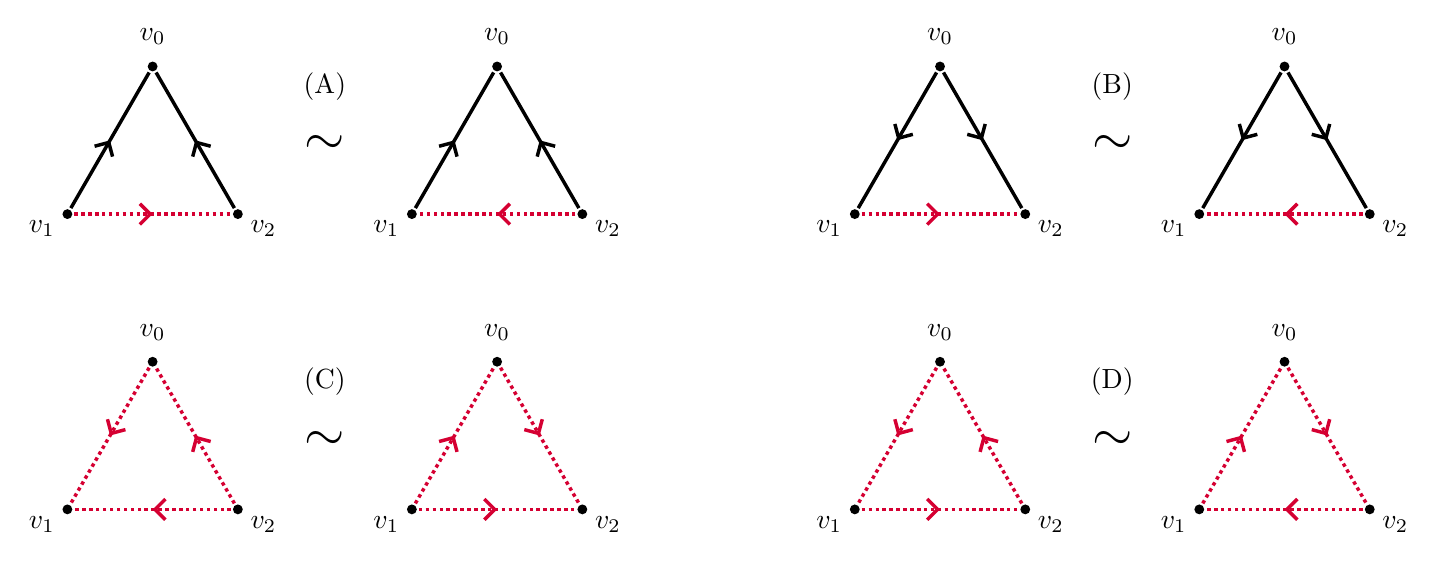}
\caption{Eight possible directed triangles on the same three vertices. The pairs correspond to the identifications made by Fagiolo, with changes denoted by dotted edges. In the definition of the transitive clustering coefficient, the triangles in (A), (B) and (C) are counted individually, and those in (D) are ignored.}
\label{Fig:triangles}
\end{figure}

A justification for the denominator in the definition is needed and is the content of the Lemma 1 
in Supplementary Materials.

Let $A = (a_{i,j})$ denote the adjacency matrix for $\GG$ with respect to some fixed ordering on its vertices. Then for each vertex $v_0\in\GG$ that is the $i$-th vertex in this ordering, $\curs{S}_2(v_0)$ can be computed by the formula
\begin{equation}
\curs{S}_2(v_0) = \sum_{j,k} (a_{i,j}+a_{j,i})(a_{i,k}+a_{k,i})(a_{j,k}+a_{k,j}) - a_{i,j}a_{j,k}a_{k,i} = 2\vec{t}_{v_0} -  \sum_{j,k} a_{i,j}a_{j,k}a_{k,i}.
\end{equation}

\subsubsection{Euler characteristic and normalised Betti coefficient.}
\label{sec:ec-nbc}
The Betti numbers of  the various topological constructions one can associate to a digraph have been shown in many works to give information about structure and function in a graph. A particular example, using Blue Brain Project data is  \cite{Fund}. 

\subsubsection{Euler characteristic.}\label{sec:ec} The Euler characteristic of a complex is possibly the oldest and most useful topological \chg{parameter}, and has been proven to be useful to theory and applications. In the setup of a directed flag complex (or any finite semi-simplicial set) the Euler characteristic is given as the alternating sum of simplex counts across all dimensions: 
\[EC(X) \defeq \sum_{n\geq 0} (-1)^n|X_n|,\]
where $|X_n|$ is the number of $n$-simplices in $X$. Alternatively, the Euler characteristic can be defined using the homology of $X$ by 
\[EC(X) \defeq \sum_{n\geq 0}(-1)^n \dim_\F(H_n(X, \F)),\]
where $\F$ is any field of coefficients. The Euler characteristic is a homotopy invariant, and can take positive or negative values according to the dominance of odd- or even-dimensional cells in the complex in question. 

\subsubsection{Normalised Betti coefficient.}\label{sec:nbc} The \hadgesh{normalised Betti coefficient} is based on a similar idea to the Euler characteristic. It is  invariant under graph isomorphism, but is not a homotopy invariant. Also, unlike the Euler characteristic, it is not independent of the chosen field of coefficients.  We view the normalised Betti coefficient as a measure of how ``efficient'' a digraph is in generating homology, without reference to any particular dimension, but with giving increasing weight to higher dimensional Betti numbers. 

Let $\GG$ be a digraph, and for each  $k\geq 0$, let $s_k(\GG)$ denote the number of $k$-simplices in the directed flag complex $|\GG|$. Fix some field $\F$. By the \hadgesh{Betti number}  $\beta_i$ of $\GG$ we mean the dimension of the homology vector space $H_i(|\GG|, \F)$. 

\begin{Defi}
	\label{Def-Normalised-Betti}
	Let $\GG$ be a locally finite digraph. Define the \hadgesh{normalised Betti coefficient} of $\GG$ to be 
	\begin{equation*}
	\mathfrak{B}(\GG) \defeq \sum_{i=0}^\infty\frac{(i+1)\beta_i(\GG)}{s_i(\GG)}.
	\end{equation*}
\end{Defi}

Normalised Betti coefficients can be defined by any linear combination of Betti numbers, and also in a much more general context (simplicial posets), which we did not explore.  Both the Euler characteristic and the normalised Betti coefficients are invariants of digraphs, and to use them as vertex functions we consider their value on the neighbourhood of a vertex. 

\subsubsection{Size (vertex count).}\label{sec:size}
The \hadgesh{size} of a digraph can be interpreted in a number of ways. One standard way to do so is for a fixed simplicial object associated to a digraph, one counts the number of simplices in each dimension. This will typically produce a vector of positive integers,  the (euclidean) size of which one can consider as the size of the digraph. Alternatively, the simplex count in any dimension can also be considered as a measure of size. In this article we interpret size as the number of vertices in the digraph. Thus by \hadgesh{size} of a vertex $v_0\in \GG$ we mean the vertex count in $N_\GG(v_0)$. When working with binary states on a digraph, neighbourhood size means the number of vertices that obtain the value 1 in $N_\GG(v_0)$.

\subsubsection{Spectral invariants.}
\label{sec:spectral}
The \hadgesh{spectrum} of a (real valued) square matrix or a  linear operator $A$ is the collection of its eigenvalues. \hadgesh{Spectral graph theory} is the study of spectra of matrices associated to graphs. It is a well developed part of combinatorial  graph theory and one that finds many applications in network theory, computer science, chemistry and many other subjects (See a collection of web links on  \href{https://sites.google.com/site/spectralgraphtheory/}{Applications of Spectral Graph Theory}). The various versions of the Laplacian matrix associated to a graph  plays a particularly important role. An interesting work relating neuroscience and the Laplacian spectrum is \cite{Laplacian}.

The \hadgesh{spectral gap} is generally defined as the difference between the two largest moduli of eigenvalues of $A$. In some situations, for instance in the case of the Laplacian matrix, the spectral gap is defined to be the  smallest modulus of nonzero eigenvalues. Given a matrix and its spectrum, either number can be computed. As a standard in this article spectral gaps are considered as the first type described above, except for the  Chung Laplacian spectrum, where the spectral gap is defined to be the value of the minimal nonzero eigenvalue. However, in several cases we considered both options. To emphasise which option is taken we decorated the parameter codes from Table \ref{tab:parameters} with a subscript ``high'' (referring to the difference between the two largest moduli) or ``low'' (referring to the smallest modulus of a nonzero eigenvalue). For example, Figures \ref{fig:plot-val3.2}, \ref{fig:plot-val3.1}, \ref{fig:plot-val4} have $\mathbf{bls_{low}}$ as a parameter, indicating the lowest nonzero value in the Bauer Laplacian spectrum (that is, the minimal nonzero eigenvalue of the Bauer Laplacian matrix).
Another variant of the standard concepts of spectra is what we call the \hadgesh{reversed} spectral gap  (Definitions \ref{Def:sg-adj&tp} and \ref{Defi:bl-sg}).

Yet another  common invariant we considered is the \hadgesh{spectral radius} which is the largest eigenvalue modulus of the matrix in question. We consider here four matrices associated to digraphs: the  adjacency matrix, the transition probability matrix, the Chung Laplacian and the Bauer Laplacian, with details to follow.

\subsubsection{The adjacency and transition probability matrices.} \label{sec:a-tp}   Let $\GG = (V,E)$ be a weighted directed graph with weights $w_{u,v}$ on the edge $(u,v)$ in $\GG$, where $w_{u,v} = 0$ if $(u,v)$ is not an edge in $\GG$.  Let \hadgesh{$W_\GG = (w_{u,v})$} denote the weighted adjacency matrix of $\GG$. Let \(\outdeg(u)\) denote the out-degree of a vertex \(u\). The \hadgesh{transition probability matrix} for $\GG$ is defined, up to an ordering of the vertex set $V$, to be the matrix $P_\GG$, with 
\begin{equation}
\label{eqn:tpsg}
P_\GG \defeq D_{\outg}^{-1}(\GG) \cdot W_\GG,
\end{equation}
where $D^{-1}_{\outg}(\GG)$ is the diagonal matrix with the reciprocal out-degree $1/\outg(u)$ as the $(u,u)$ entry, if $\outg(u)\neq 0$, else the $(u,u)$ entry is  0.

\begin{Defi}\label{Def:sg-adj&tp}
Let $\GG$ be a digraph with adjacency matrix $A_\GG$ and transition probability matrix $P_\GG$. The \hadgesh{adjacency spectral gap} and the \hadgesh{transition probability spectral gap} of $\GG$ are defined in each case to be the  difference between the two largest moduli of its eigenvalues. 

If we replace in the definition of $P_\GG$ the matrix $D_{\outg}(\GG)$ by $D_{\ing}(\GG)$ of in-degrees, we obtain a variant of the transition probability matrix, which we denote by $P_\GG^{\mathrm{rev}}$, and its spectral gap is referred to as the \hadgesh{reversed transition probability spectral gap}.
\end{Defi}

For our specific application we considered the ordinary (as opposed to weighted) adjacency matrix, namely where all weights $w_{u,v}$ are binary. We considered as parameters the spectral radius of the adjacency and transition probability  matrices.

\subsubsection{The Chung Laplacian.}\label{sec:cl} Chung defined the directed Laplacian for a weighted directed graph in \cite{Chung}. The Perron--Frobenius theorem \cite{HJ} states that any real valued irreducible square matrix $M$ with non-negative entries admits a unique eigenvector, all of whose entries are positive. The eigenvalue for this eigenvector is routinely denoted by $\rho$, and it is an upper bound for any other eigenvalue of $M$. 

If $\GG$ is strongly connected (that is, when there is a directed path between any two vertices in $\GG$), then its transition probability matrix is irreducible, and hence satisfies the conditions of the Perron--Frobenius theorem. Thus $P_\GG$ has an eigenvector, all of whose entries are positive. The \hadgesh{Perron vector} is such an eigenvector $\phi$ that is  normalised in the sense that $\sum_{v\in V}\phi(v) = 1$. Let $\Phi$ denote the diagonal matrix with the $v$-th diagonal entry given by $\phi(v)$, and let $P$ denote the transition probability matrix $P_\GG$. 

\begin{Defi}\label{Defi:ch-sg}
Let $\GG$ be a strongly connected digraph. The \hadgesh{Chung Laplacian matrix} for $\GG$ is defined by
\begin{equation}
\LL \defeq I - \frac{\Phi^{\frac{1}{2}}P\Phi^{-\frac{1}{2}}+\Phi^{-\frac{1}{2}}P^*\Phi^{\frac{1}{2}}}{2},
\end{equation}
where \(P^*\) denotes the Hermitian transpose of a matrix \(P\). The \hadgesh{Chung Laplacian spectral gap} \(\lambda\) for a digraph $\GG$ is defined to be the smallest nonzero eigenvalue of the Laplacian matrix.
\end{Defi}

The Chung Laplacian spectral gap \(\lambda\)  of a strongly connected digraph $\GG$ is related to the spectrum of its transition probability matrix $P$ by \cite[Theorem 4.3]{Chung}, which states that the inequalities
\begin{equation}
\min_{i\neq 0}\left\{1 - |\rho_i|\right\} \le \lambda \le \min_{i\neq 0}\left\{1-\mathrm{Re}(\rho_i)\right\}
\end{equation}
hold, where the minima are taken over all eigenvalues of $P$. The theory in \cite{Chung} applies for strongly connected graphs and we therefore defined the Laplacian spectral gap of a neighbourhood to be  that of its largest strongly connected component. 

We use the spectral gap of the Chung Laplacian for the largest strongly connected component of a neighbourhood as a selection parameter. When used as a feature parameter we consider the spectral gap of the largest strongly connected component of the active subgraph of the neighbourhood. We also use the spectral radius of the Chung Laplacian, both as  selection and feature parameter.

\subsubsection{The Bauer Laplacian.}\label{sec:bl} The requirement that $\GG$ is strongly connected is   a nontrivial restriction, but it is required in order to guarantee that the eigenvalues are real. An alternative definition of a Laplacian matrix for directed graphs that does not require strong connectivity was introduced in \cite{Bauer}.  Let \(C(V)\) denote the vector space of complex valued functions on \(V\). The  Bauer  Laplacian for $\GG$ is the transformation \(\Delta_\GG \colon C(V) \rightarrow C(V)\) defined by
\begin{equation}
\label{eqn:blsg}
\Delta_\GG(f)(v) \defeq 
\begin{cases}
	f(v) - \frac{1}{\indeg(v)}\Sigma_v w_{v,u} f(u), & \ \text{if} \ \indeg(v) \ne 0, \\
	0, & \ \text{otherwise}.
\end{cases}
\end{equation}

If \(\indeg(v) \ne 0\) for all \(v \in V\), then \(\Delta_\GG\) corresponds to the matrix \(\Delta_\GG = I - D_{\ing}^{-1}(\GG)\cdot W_\GG\), where \(D^{-1}_{\ing}(\GG)\) is defined analogously to $D^{-1}_{\outg}(\GG)$ in Definition \ref{Def:sg-adj&tp}, and \(W_\GG\) is the weighted adjacency matrix. In our case $W$ is again taken to be the ordinary binary adjacency matrix. 

\begin{Defi}\label{Defi:bl-sg}
The \hadgesh{Bauer Laplacian spectral gap} is  the difference between the two largest moduli of eigenvalues in the spectrum. \end{Defi}
We also considered the spectral radius of the Bauer Laplacian. Both are used as selection as well as feature parameters. 
If we replace in the definition $D_{\ing}(\GG)$ by $D_{\outg}(\GG)$ we obtain a matrix $\Delta_\GG^{\mathrm{rev}}$, whose spectral gap we refer to as the \hadgesh{reversed Bauer Laplacian spectral gap}.

\subsection{acknowledgments}
The authors wish to thank Michael Reimann of the Blue Brain Project for supporting this project and sharing his wisdom and knowledge with us, and Daniela Egas Santander for suggestions to advance our ideas. The authors acknowledge support from EPSRC, grant  EP/P025072/ - ``Topological Analysis of Neural Systems'', and from \'Ecole Polytechnique F\'ed\'erale de Lausanne via a collaboration agreement with the University of Aberdeen. Dejan Govc acknowledges partial support from the Slovenian Research Agency programme P1-0292 and grant N1-0083. 

\bibliographystyle{unsrt}
\bibliography{Tribes}

\begin{thebibliography}{10}

\bibitem{Gleeson}
J.~P. Gleeson.
\newblock Cascades on correlated and modular random networks.
\newblock {\em Phys. Rev. E}, 77, 2008.

\bibitem{Samuelsson-Socolar}
B.~Samuelsson and J.~E.~S. Socolar.
\newblock Exhaustive percolation on random networks.
\newblock {\em Phys. Rev. E}, 74, 2006.

\bibitem{comm-concep-technical-advances}
A.~Churchland and L.~Abbott.
\newblock Conceptual and technical advances define a key moment for theoretical
  neuroscience.
\newblock {\em Nature Neuroscience}, 19(3), 2016.

\bibitem{history-simu-neuro}
X.~Fan and H.~Markram.
\newblock A brief history of simulation neuroscience.
\newblock {\em Front. Neuroinform}, 13:32, 2019.

\bibitem{dim-reduc-neural}
J.~Cunningham and B.~Yu.
\newblock Dimensionality reduction for large-scale neural recordings.
\newblock {\em Nature Neuroscience}, 17(1), 2014.

\bibitem{Stein-Gossen-Jones}
R.~Stein, E.~Gossen, and K.~Jones.
\newblock Neuronal variability: noise or part of the signal?
\newblock {\em Nature Reviews Neuroscience}, 6:389--397, 2005.

\bibitem{connect-brain-func}
C.~Bargmann and E.Marder.
\newblock From the connectome to brain function.
\newblock {\em Nature Methods}, 10(6), 2013.

\bibitem{higher-order-syn}
B.~Chambers and J.~MacLean.
\newblock Higher-order synaptic interactions coordinate dynamics in recurrent
  networks.
\newblock {\em Plos Comput Biol}, 12(8), 2016.

\bibitem{connect-dynamics-curto-morrison}
C.~Curto and K.~Morrison.
\newblock Relating network connectivity to dynamics: opportunities and
  challenges for theoretical neuroscience.
\newblock {\em Current Opinion in Neurobiology}, 58:11--20, 2019.

\bibitem{Rubinov-Sporns}
M.~Rubinov and O.~Sporns.
\newblock Complex network measures of brain connectivity: Uses and
  interpretations.
\newblock {\em Neuroimage}, 52:1059--1069, 2010.

\bibitem{topo-cell-assembly}
A.~Babichev, D.~Ji, F.~M\'emoli, and Y.~Dabaghian.
\newblock A topological model of the hippocampal cell assembly network.
\newblock {\em Front. Syst. Neurosci.}, 10:50, 2016.

\bibitem{cell-groups}
C.~Curto and V.~Itskov.
\newblock Cell groups reveal structure of stimulus space.
\newblock {\em Plos Comput Biol}, 4(10), 2008.

\bibitem{network-motifs}
R.~Milo, S.~Shen-Orr, S.~Itzkovitz, N.~Kashtan, D.~Chklovskii, and U.~Alon.
\newblock Network motifs: simple building blocks of complex networks.
\newblock {\em Science}, 298:824--827, 2002.

\bibitem{GLS}
D.~Govc, R.~Levi, and J.P. Smith.
\newblock Complexes of tournaments, directionality filtrations and persistent
  homology.
\newblock {\em J Appl. and Comput. Topology}, 5:313--337, 2021.

\bibitem{Fund}
M.~W. Reimann, M.~Nolte, M.~Scolamiero, K.~Turner, R.~Perin, G.~Chindemi,
  P.~D{\l}otko, R.~Levi, K.~Hess, , and H.~Markram.
\newblock Cliques of neurons bound into cavities provide a missing link between
  structure and function.
\newblock {\em Frontiers in computational neuroscience}, 11:48, 2017.

\bibitem{Bianconi}
Alexander~P. Kartun-Giles and Ginestra Bianconi.
\newblock Beyond the clustering coefficient: A topological analysis of node
  neighbourhoods in complex networks.
\newblock {\em Chaos, Solitons \& Fractals: X}, 1:100004, 2019.

\bibitem{Chung}
F.~Chung.
\newblock Laplacians and the cheeger inequality for directed graphs.
\newblock {\em Annals of Combinatorics}, 9:1--19, 2005.

\bibitem{Laplacian}
S.~C. de~Lange, M.~A. de~Reus, and M.~P. van~den Heuvel.
\newblock The laplacian spectrum of neural networks.
\newblock {\em Front. Comput. Neurosci.}, 7:189, 2014.

\bibitem{Reimann-abdn}
M.~Reimann, H.~Riihim\"aki, J.~P. Smith, J.~Lazovskis, C~Pokorny, and R.~Levi.
\newblock Topology of synaptic connectivity constrains neuronal stimulus
  representation, predicting two complementary coding strategies.
\newblock {\em BioArxiv}, 2021.

\bibitem{Cell_paper}
Henry Markram, Eilif Muller, Michael W.~Reimann Srikanth~Ramaswamy, ..., Javier
  DeFelipe, Sean~L. Hill, Idan Segev, and Felix Sch\"urmann.
\newblock Reconstruction and simulation of neocortical microcircuitry.
\newblock {\em Cell}, 163:456--492, 2015.

\bibitem{NEST}
Jakob Jordan, H{\aa}kon M{\o}rk, Stine~Brekke Vennemo, Dennis Terhorst,
  Alexander Peyser, Tammo Ippen, Rajalekshmi Deepu, Jochen~Martin Eppler,
  Alexander van Meegen, Susanne Kunkel, Ankur Sinha, Tanguy Fardet, Sandra
  Diaz, Abigail Morrison, Wolfram Schenck, David Dahmen, Jari Pronold, Jonas
  Stapmanns, Guido Trensch, Sebastian Spreizer, Jessica Mitchell, Steffen
  Graber, Johanna Senk, Charl Linssen, Jan Hahne, Alexey Serenko, Daniel
  Naoumenko, Eric Thomson, Itaru Kitayama, Sebastian Berns, and Hans~Ekkehard
  Plesser.
\newblock Nest 2.18.0, June 2019.

\bibitem{Flagser}
D.~L\"utgehetmann, D.~Govc, J.~P. Smith, and R.~Levi.
\newblock Computing persistent homology of directed flag complexes.
\newblock {\em Algorithms}, 13(1), 2020.

\bibitem{WattsStrogatz}
D.~Watts and S.~Strogatz.
\newblock Collective dynamics of 'small-world' networks.
\newblock {\em Nature}, 393:440--442, 1998.

\bibitem{fagiolo}
G.~Fagiolo.
\newblock Clustering in complex directed networks.
\newblock {\em Phys. Rev. E}, 76:026107, Aug 2007.

\bibitem{HJ}
R.A. Horn and C.R. Johnson.
\newblock {\em Matrix Analysis}.
\newblock Cambridge University Press, second edition edition, 1990.

\bibitem{Bauer}
F.~Bauer.
\newblock Normalized graph laplacians for directed graphs.
\newblock {\em Linear Algebra and its Applications}, 436:4193--4222, 2012.

\bibitem{Rat}
Blue~Brain Project.
\newblock Digital reconstruction of neocortical microcircuitry, 2019.
\newblock Data retrieved from \url{https://bbp.epfl.ch/nmc-portal/downloads}.

\bibitem{Chartrand-et-al}
G.~Chartrand, L.~Lesniak, and P.~Zhang.
\newblock {\em Graphs and Digraphs}.
\newblock Textbooks in Mathematics. CRC Press, sixth edition edition, 2016.

\bibitem{Knuth}
D.~E. Knuth.
\newblock Postscript about np-hard problems.
\newblock {\em SIGACT News}, 6(2):15–16, April 1974.

\bibitem{Domination}
J.~Li, R.~Potru, and F.~Shahrokhi.
\newblock A performance study of some approximation algorithms for computing a
  small dominating set in a graph.
\newblock {\em Algorithms}, 13(12), 2020.

\bibitem{Jukna}
S.~Jukna.
\newblock {\em Extremal combinatorics}.
\newblock Springer-Verlag, second edition edition, 2011.

\bibitem{Carlsson}
Gunnar Carlsson.
\newblock Topology and data.
\newblock {\em Bull. Amer. Math. Soc. (N.S.)}, 46(2):255--308, 2009.

\end{thebibliography}


\newpage
\section{Supplementary Material}

\begin{Lem}\label{Lem-possible-transitive-3}
	Let $\GG$ be a digraph and let $v_0\in\GG$ be a vertex. Then the number of possible directed 3-cliques containing $v_0$ is given by 
	\begin{equation}
	\deg(v_0)(\deg(v_0)-1) - (\indeg(v_0)\outdeg(v_0) + R_{v_0}).
	\end{equation}
\end{Lem}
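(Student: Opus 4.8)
The plan is to count directly the transitive $3$-cliques (transitive $3$-tournaments) on $v_0$ and two of its neighbours that are \emph{theoretically possible}, meaning that the two edges joining $v_0$ to the chosen neighbours are retained as in $\GG$ (so a reciprocal neighbour may be used in either direction), while the edge between the two neighbours is oriented freely so as to complete a transitive tournament. This count is exactly the denominator of $C_T(v_0)$. Since $\curs{S}_1(v_0)=\deg(v_0)$ counts the directed edges incident to $v_0$, and a reciprocal neighbour contributes two such edges, we have $\deg(v_0)=\indeg(v_0)+\outdeg(v_0)$; this identity is what makes the final simplification close. The crucial structural remark is that in any transitive $3$-tournament the three vertices carry a unique linear order, so $v_0$ occupies exactly one of three mutually exclusive positions: minimum (\emph{source}), maximum (\emph{sink}), or intermediate (\emph{middle}). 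I would organise the count according to this trichotomy, which guarantees that nothing is double counted across cases.

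First I would handle the source case: $v_0$ must emit an edge to each chosen neighbour, so both must be out-neighbours or reciprocal, of which there are $\outdeg(v_0)$. An ordered choice of two distinct such vertices $(u,w)$ determines the unique source tournament $v_0\to u$, $v_0\to w$, $u\to w$, and every source tournament arises once this way, so this case contributes $\outdeg(v_0)(\outdeg(v_0)-1)$. The sink case is dual and contributes $\indeg(v_0)(\indeg(v_0)-1)$. In the middle case $v_0$ receives one edge and emits the other, say $u\to v_0\to w$, which forces $u\to w$ by transitivity; thus $u$ ranges over the $\indeg(v_0)$ in-reachable vertices and $w$ over the $\outdeg(v_0)$ out-reachable vertices, subject only to $u\neq w$. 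A vertex is both in- and out-reachable precisely when it is reciprocal, so there are exactly $R_{v_0}$ forbidden coincidences, and this case contributes $\indeg(v_0)\,\outdeg(v_0)-R_{v_0}$.

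It remains to add the three contributions and simplify. Writing $d^-=\indeg(v_0)$ and $d^+=\outdeg(v_0)$ for brevity, the total is
\[
d^+(d^+-1)+d^-(d^--1)+d^-d^+-R_{v_0}=(d^-)^2+(d^+)^2+d^-d^+-d^--d^+-R_{v_0}.
\]
Since $\deg(v_0)=d^-+d^+$, the product $\deg(v_0)(\deg(v_0)-1)$ expands to $(d^-)^2+2d^-d^++(d^+)^2-d^--d^+$, so the total above equals $\deg(v_0)(\deg(v_0)-1)-(d^-d^++R_{v_0})$, which is the claimed formula. I expect the main obstacle to be the bookkeeping around reciprocal neighbours: one must confirm that the three positional cases are genuinely disjoint, that within the source and sink cases opposite orderings give distinct tournaments, and that the sole overlap of the in- and out-reachable sets in the middle case is precisely the $R_{v_0}$ reciprocal neighbours. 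A secondary but essential point to state carefully is the convention $\deg=\indeg+\outdeg$, since without the doubling of reciprocal neighbours in the degree the algebra does not reduce to the stated expression.
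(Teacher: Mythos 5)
Your proof is correct, and it takes a genuinely different route from the paper's. The paper partitions the possible transitive $3$-tournaments by the \emph{type of the two chosen neighbours}, using the counts $I_{v_0}$, $O_{v_0}$, $R_{v_0}$ of purely-in, purely-out and reciprocal neighbours: pairs of in-neighbours give $I_{v_0}(I_{v_0}-1)$, pairs of out-neighbours give $O_{v_0}(O_{v_0}-1)$, mixed pairs give $I_{v_0}O_{v_0}$, a reciprocal neighbour paired with a non-reciprocal one gives $3$ tournaments each, and a pair of reciprocal neighbours gives $6$, for a five-case sum followed by a multi-line algebraic reduction to $\deg(v_0)(\deg(v_0)-1)-(\indeg(v_0)\outdeg(v_0)+R_{v_0})$. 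You instead partition by the \emph{position of $v_0$ in the linear order} of the tournament (source, sink, middle), which collapses the reciprocal bookkeeping into the degree conventions: the out-reachable and in-reachable vertex sets have sizes $\outdeg(v_0)$ and $\indeg(v_0)$ directly, the three cases contribute $\outdeg(v_0)(\outdeg(v_0)-1)$, $\indeg(v_0)(\indeg(v_0)-1)$ and $\indeg(v_0)\outdeg(v_0)-R_{v_0}$, and the simplification is one line. Your only reciprocal correction, the $-R_{v_0}$ for the coincidence $u=w$ in the middle case, is exactly right, since in-reachable $\cap$ out-reachable is the set of reciprocal neighbours; and the positional trichotomy is indeed disjoint because each tournament determines $v_0$'s position. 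What the paper's version buys is an explicit accounting of how reciprocal edges inflate the count ($3$ and $6$ per configuration), which is perhaps more transparent about \emph{where} the possibilities come from; what yours buys is brevity, fewer cases, and a formula that emerges already expressed in $\indeg$, $\outdeg$ and $R_{v_0}$ rather than requiring the substitutions $\indeg(v_0)=I_{v_0}+R_{v_0}$, $\outdeg(v_0)=O_{v_0}+R_{v_0}$. You are also right to flag the convention $\deg(v_0)=\indeg(v_0)+\outdeg(v_0)$ (reciprocal neighbours counted twice) as essential; this is consistent with the paper's $\curs{S}_1(v_0)=\deg(v_0)$, the count of directed $2$-cliques at $v_0$, and without it the algebra would not close.
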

\begin{proof}
	The set of in-neighbours of $v_0$ give rise to $2{I_{v_0}\choose 2} = I_{v_0}(I_{v_0}-1)$ directed 3-cliques containing $v_0$. Similarly the out-neighbours of $v_0$ give rise to $ O_{v_0}(O_{v_0}-1)$ directed 3-cliques containing $v_0$. A choice of each gives an extra $I_{v_0}O_{v_0}$ directed 3-cliques. Next, each reciprocal neighbour together with either an in-neighbour or an out-neighbour gives rise to three directed 3-cliques at $v_0$. The total number of those is $3R_{v_0}(I_{v_0}+O_{v_0})$. Finally, pairs of reciprocal neighbours give rise to six directed 3-cliques at $v_0$, and the total number of those is $6{R_{v_0}\choose 2}=3R_{v_0}(R_{v_0}-1)$. Let $\curs{P}(v_0)$ denote the total number of transitive 3-tournaments that can be formed by $v_0$ and its neighbours. 
	Summing up we have 
	\begin{align*}
	\curs{P}(v_0) &=  I_{v_0}(I_{v_0}-1) + O_{v_0}(O_{v_0}-1) + I_{v_0}O_{v_0} + 3R_{v_0}(I_{v_0}+O_{v_0})+3R_{v_0}(R_{v_0}-1)\\
	&= (I_{v_0}-O_{v_0})^2 + 3(I_{v_0}O_{v_0}+R_{v_0}I_{v_0}+R_{v_0}O_{v_0} +R_{v_0}^2) -(3R_{v_0}+I_{v_0}+O_{v_0})\\
	&= (\indeg(v_0)-\outdeg(v_0))^2 + 3\indeg(v_0)\outdeg(v_0) -(\indeg(v_0)+\outdeg(v_0)) - R_{v_0}\\
	&= (\indeg(v_0)+\outdeg(v_0))^2 - \indeg(v_0)\outdeg(v_0) - \deg(v_0) -R_{v_0}\\
	&= \deg(v_0)(\deg(v_0)-1) - (\indeg(v_0)\outdeg(v_0)+R_{v_0})
	\end{align*}
	as claimed.
\end{proof}

\subsection{Size, distribution and structure of neighbourhoods in a sample digraph}
\label{sec:botany}

We compare neighbourhoods in a sample digraph sorted by the parameters listed in Table \ref{tab:parameters} in terms of some structural features. The digraph $\GG$ we use is the connectivity graph of the Blue Brain Project reconstruction of the cortical microcircuitry in a young rat brain \cite{Cell_paper}.  The data we used  is available at \cite{Rat}. Our classification experiments are done on the same microcircuit. We also applied the same measurements to other collections of digraphs and obtained  different results. Since our aim is primarily to examine possible relationship between structure and function, we do not report those results here. These extended results are presented at \href{https://homepages.abdn.ac.uk/neurotopology/}{Aberdeen Neurotopology Group} webpage.

We considered the top 50 vertices in the graph sorted by the parameters listed in Table \ref{tab:parameters}. For each parameter we computed the size in terms of number of vertices in each neighbourhood and the pairwise intersections, again in terms of the number of vertices in each intersection. In Table \ref{tab:local-botany-50} we report the minimum, maximum and average of these numbers among the 50 neighbourhoods with highest value for each parameter. We also computed the first six  Betti numbers of each neighbourhood and report the average of these numbers for each parameter. Finally, we considered the union of neighbourhoods in decreasing order, sorted by each parameter,  and computed the number of centres required for their neighbourhoods to cover 90\% of the neurons in entire microcircuit (that is, 28,310 neurons). 

We notice that the top 50 centres with respect to the last six \chg{parameters} listed in Table \ref{tab:local-botany-50} tend to generate neighbourhoods of size close or below the average, with relatively very small intersection. This correlates well with their capacity as selection parameters in our experiments (see Figure \ref{Fig:classification}). However, the two types of clustering coefficients, {\bf fcc} and {\bf tcc}, also generate small top neighbourhoods with small intersection, but are not exceptional as selection parameters.

We also examined the distribution of values for each parameter across the entire graph. The outcome is given in Figure \ref{fig:distribution}, which visually justifies considering neighbourhoods with both highest and lowest parameter values. We did not find a  correlation between the distribution of parameter values and their performance as selection or feature parameters. 

We are therefore led to the conclusion that the performance of graph parameters as selection and/or feature parameters cannot be explained by the structural features we examined. This compares well with the conclusion drawn in \cite{Reimann-abdn}, in which similar experiments using the same dataset but with a different methodology yield results that cannot be explained by structural features such as size and mutual intersection.

\begin{figure}[H]
\centering
\renewcommand\arraystretch{1.3}
\scalebox{.8}{\begin{tabular}{r||c|c|c||c|c|c||c|c|c|c|c|c||c}
\textbf{Parameter} & \multicolumn{3}{c||}{\textbf{size}} & \multicolumn{3}{c||}{\textbf{intersection size}} & \multicolumn{6}{c||}{\textbf{Betti numbers}} & \textbf{90\% cover} \\\hline 
& \textit{min} & \textit{max} & \textit{avg} & \textit{min} & \textit{max} & \textit{avg} & $\beta_0$ & $\beta_1$ & $\beta_2$ & $\beta_3$ & $\beta_4$ & $\beta_5$ & \textit{centre count}\\\hline
$\textbf{fcc}$ & 3 &  181 &  87.9 & 0 &  22 &  0.8 & 1 &  11 &  55 &  6 &  0 &  0 & 1591 \\\hline
$\textbf{tcc}$ & 3 &  170 &  86.2 & 0 &  22 &  0.6 & 1 &  10 &  49 &  5 &  0 &  0 & 1280 \\\hline
$\textbf{ec}$ & 1184 &  1633 &  1456.3 & 30 &  241 &  132.0 & 1 &  288 &  13237 &  2463 &  21 &  0 & 204 \\\hline
$\textbf{nbc}$ & 2 &  1184 &  589.9 & 0 &  132 &  21.6 & 1 &  142 &  3047 &  634 &  11 &  1 & 555 \\\hline
$\textbf{size}$ & 1417 &  1633 &  1509.7 & 44 &  241 &  130.3 & 1 &  287 &  11734 &  2310 &  19 &  0 & 179 \\\hline
$\textbf{asg}$ & 945 &  1604 &  1257.0 & 19 &  226 &  116.3 & 1 &  190 &  10362 &  3108 &  43 &  0 & 270 \\\hline
$\textbf{asr}$ & 1120 &  1622 &  1406.9 & 42 &  241 &  146.9 & 1 &  243 &  12603 &  3127 &  38 &  0 & 249 \\\hline
$\textbf{blsg}$ & 20 &  1344 &  555.2 & 0 &  96 &  12.9 & 1 &  111 &  1444 &  162 &  1 &  0 & 239 \\\hline
$\textbf{blsr}$ & 79 &  974 &  398.3 & 0 &  67 &  7.4 & 1 &  63 &  431 &  56 &  0 &  0 & 318 \\\hline
$\textbf{clsg}$ & 8 &  98 &  40.8 & 0 &  5 &  0.2 & 1 &  0 &  0 &  0 &  0 &  0 & 560 \\\hline
$\textbf{clsr}$ & 69 &  814 &  229.3 & 0 &  35 &  2.9 & 1 &  28 &  81 &  7 &  0 &  0 & 1297 \\\hline
$\textbf{tpsg}$ & 8 &  939 &  368.8 & 0 &  65 &  7.5 & 1 &  62 &  1077 &  131 &  1 &  0 & 445 \\\hline
$\textbf{tpsr}$ & 84 &  1166 &  524.4 & 0 &  98 &  11.3 & 1 &  101 &  1105 &  167 &  1 &  0 & 209 \\\hline\hline
\textit{all vertices} &  2 &  1633 &  492.9 & 0 & 241 & 9.9 & 1 &  94 &  1032 &  146 &  1 &  0 & 212
\end{tabular}}
\vspace{10pt}
\renewcommand\arraystretch{1}
\caption{Size, pairwise intersections, average Betti numbers for the top 50 neighbourhoods of each parameter, and 90\% coverage of the graph by neighbourhoods of highest valued centres, by each parameter. The last row is the same among all vertices, with the last entry on the right giving the average number required for 90\% coverage over 50 random permutations.}
\label{tab:local-botany-50}
\end{figure}

\begin{figure}[H]
\centering
\includegraphics[scale=.5]{./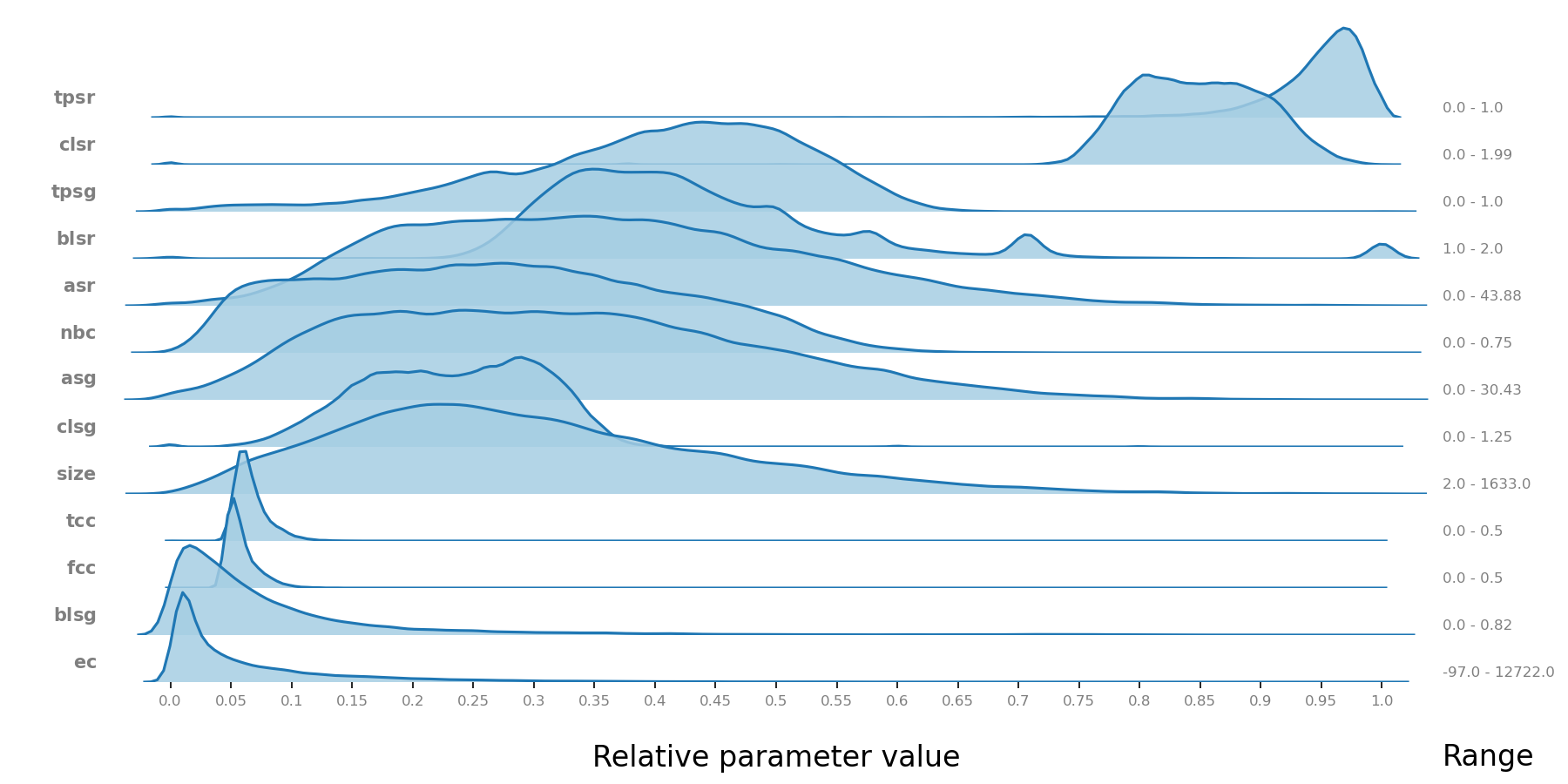}
\caption{Distribution of parameter values across the entire Blue Brain Project microcircuit. The numbers on the right are minimum to maximum values. The values on the $x$-axis are the relative parameter values, rescaled from 0 to 1.  Compare with Figure \ref{fig:distribution-full}}
\label{fig:distribution}
\end{figure}

\chg{The coverage capability of neighbourhoods sorted by various graph and topological parameters is related to another graph theoretic concept. Let $\GG$ be a digraph. }If  $S$ is the entire vertex set of $\GG$, then $N_\GG(S) = \GG$, but  the converse is not true, as $S$ may be much smaller than the full vertex set and still satisfy this condition. Subsets of vertices whose neighbourhoods are the entire  graph are  well studied in graph theory \cite[Section 12.4]{Chartrand-et-al}.

\begin{Defi}\label{Def-domination}
Let $\GG$ be a finite digraph with vertex set $V$. A subset $S\subseteq V$ is  a \hadgesh{dominating set} if $N_\GG(S) =\GG$. The minimum cardinality of a dominating set for $\GG$ is called the \hadgesh{domination number} and is denoted by $\gamma(\GG)$. A dominating set of cardinality $\gamma(\GG)$ is said to be a \hadgesh{minimum dominating set}. 
\end{Defi}

Computing a minimal dominating set is known to be an NP hard problem \cite{Knuth}, though there exist good approximation algorithms. A good summary of the problem and common approaches appears in \cite{Domination}. In Table \ref{tab:local-botany-50} we present, among other computations, the size of  neighbourhoods and the number of neighbourhood from  a sorted list that it takes to cover 90\% of the Blue Brain Project microcircuit. Depending on the selection parameter used, the results are quite different. This suggests that a choice of neighbourhoods informed by certain vertex parameters may give ways of producing more efficient approximation algorithms for the domination number of graphs.

\subsection{Further Graph parameters}
\label{sec:further-parameters}
We describe here further graph and topological parameters we examined.
\subsubsection{Degrees}
For each vertex $v$ in a graph $\GG$, its (total) \hadgesh{degree} $\deg(v)$ is the number of vertices in the open neighbourhood of $v$. The \hadgesh{in- and out-degree} of $v$, denoted $\indeg(v)$ and $\outdeg(v)$ respectively, mean the number of in- and out-neighbours of $v$ respectively. These invariants were examined as graph parameters in our classification algorithm and were found inefficient, except in the case of \hadgesh{size}, which is very closely related to degree and turns out to be the strongest feature parameter we found. 

\subsubsection{Reciprocal degree}
By the  \hadgesh{reciprocal degree} of a vertex $v$ we mean the number of neighbours that are both in-neighbours and out-neighbours. We used reciprocal degree in this work in two ways. The sum of all reciprocal degrees in a neighbourhood (abbreviated {\bf rc}), and the reciprocal degree of the centre ({\bf rc-centre}). 

\subsubsection{Density coefficients}
\label{SSS:dc}
Every $(k+1)$-clique contains $k+1$ $k$-cliques. But no number of $k$-cliques in a graph is guaranteed to form any $(k+1)$-cliques. The \hadgesh{density coefficient} is a ratio of the number of $(k+1)$-cliques by that of $k$-cliques, normalised in its ambient graph. 

\begin{Defi}
\label{Def:dc}
Let $\GG$ be a digraph with $n$ vertices. 
	For $k\geq 2$ define the \hadgesh{$k$-th density coefficient of $\GG$ at $v_0$}  by the formula 
	\begin{equation*}
	D_k(v_0) \defeq \frac{k}{(k+1)(n-k)}\cdot\frac{\curs{S}_k(v_0)}{\curs{S}_{k-1}(v_0)}.
	\end{equation*}
	\label{Def-density-coefficient}
\end{Defi}
The factor $k/(k+1)(n-k)$ normalises the invariant, so that $D_k(v_0)=1$ for every $1 < k < n$ if $v_0$ is a  vertex in $\GG$ that is a complete digraph on $n$ vertices. This is explained in the  next lemma.

\begin{Lem}
\label{Lem:dc}
	For each pair of natural numbers $0<k< n$,  any digraph $\GG$ on $n$ vertices, and any vertex $v_0$ in it,  \[\frac{\curs{S}_k(v_0)}{\curs{S}_{k-1}(v_0)}\le \frac{(k+1)(n-k)}{k}\]
	with equality obtained if and only if $\GG$ is a complete digraph on $n$ vertices.
	\label{Lem-density}
\end{Lem}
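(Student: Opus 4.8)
My plan is to prove the inequality by a double-counting (incidence) argument between the directed $k$-cliques and directed $(k+1)$-cliques through $v_0$, and then to treat the equality case by analysing exactly when every such clique admits the maximal number of extensions.

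First I would reinterpret the two quantities combinatorially: $\curs{S}_{k-1}(v_0)$ counts the directed $k$-cliques containing $v_0$ (the $(k-1)$-simplices of $|\GG|$ through $v_0$), while $\curs{S}_k(v_0)$ counts the directed $(k+1)$-cliques through $v_0$. I would then form the set of incident pairs $(\tau,\sigma)$, where $\tau$ is a directed $k$-clique through $v_0$, $\sigma$ is a directed $(k+1)$-clique through $v_0$, and $\tau$ is obtained from $\sigma$ by deleting a vertex other than $v_0$ (equivalently, $\sigma$ is obtained from $\tau$ by inserting one new vertex into its linear order). Counting these pairs from the $\sigma$ side is immediate: each directed $(k+1)$-clique has $k+1$ vertices, and deleting any one of the $k$ vertices distinct from $v_0$ yields such a $\tau$, so every $\sigma$ accounts for exactly $k$ pairs, giving $k\,\curs{S}_k(v_0)$ in total.

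Next I would count from the $\tau$ side. An extension of a fixed $\tau$ is determined by a new vertex $u\in V\setminus\tau$ together with a position at which to insert $u$ into the ordered $k$-tuple $\tau$; there are $n-k$ choices of $u$ and $k+1$ positions, and each (vertex, position) pair produces at most one ordered $(k+1)$-tuple, which is a directed clique precisely when the required edges are present. Hence the number $e(\tau)$ of extensions satisfies $e(\tau)\le (k+1)(n-k)$. Summing over all $\tau$ and equating with the previous count gives $k\,\curs{S}_k(v_0)=\sum_\tau e(\tau)\le (k+1)(n-k)\,\curs{S}_{k-1}(v_0)$, which rearranges to the claimed bound.

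Finally, the equality case. The forward implication is direct: in the complete digraph every ordered tuple of distinct vertices is a directed clique, so every insertion is valid, $e(\tau)=(k+1)(n-k)$ for all $\tau$, and equality follows. The converse, equality $\Rightarrow \GG$ complete, is the part I expect to be the main obstacle, and I would argue it by contraposition: assuming some pair $x\ne y$ is not reciprocally connected (and that $\curs{S}_{k-1}(v_0)>0$, so a directed $k$-clique $\tau_0$ through $v_0$ exists), I would exhibit a single clique $\tau$ through $v_0$ with $e(\tau)<(k+1)(n-k)$. The key local fact is that a vertex $u$ outside $\tau$ can be inserted in all $k+1$ positions only if $u$ is reciprocally adjacent to every vertex of $\tau$; so it suffices to produce a directed $k$-clique through $v_0$ containing exactly one of $x,y$. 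If one of $x,y$ lies in $\tau_0$ and the other outside, then $\tau_0$ already works; otherwise every outside vertex is reciprocally joined to all of $\tau_0$, and I would swap a suitable vertex of $\tau_0$ (always retaining $v_0$) for an auxiliary outside vertex to manufacture such a clique. The delicate points are keeping $v_0$ in the clique during this swap and the small-$k$ degeneracies — for $k=1$ the condition only forces $v_0$ to be reciprocally adjacent to all vertices rather than full completeness, so the intended range is $k\ge 2$, matching the normalisation of the density coefficient $D_k$. It is precisely these case distinctions, rather than the main counting, where the real care will be required.
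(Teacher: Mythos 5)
Your double-counting argument for the inequality is exactly the paper's proof (which follows Jukna): you form the same set of pairs $(\tau,\sigma)$, count exactly $k$ pairs per $\sigma$ because $v_0$ must be retained, and bound the extensions of a fixed $\tau$ by $(n-k)(k+1)$ via the choice of a new vertex and an insertion position, so on the inequality itself there is nothing to compare. Where you genuinely diverge is the equality clause. The paper's proof handles it in one sentence: it computes the ratio for the complete digraph to show the bound is sharp, i.e. it only proves the ``if'' direction and never addresses why equality should force completeness. You attempt the converse, and your key local fact is right: $u\notin\tau$ is insertable in all $k+1$ positions iff it is reciprocally adjacent to every vertex of $\tau$ (the first and last positions already force reciprocity), so a single clique through $v_0$ containing exactly one vertex of a non-reciprocal pair witnesses strict inequality, and your swap argument produces one when $k\ge 2$ (discarding a non-central vertex of $\tau_0$ is possible precisely because $\tau_0$ has a vertex other than $v_0$). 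Your flag on $k=1$ is not just a ``delicate point'' — it is an actual flaw in the lemma as stated: for $k=1$ equality reads $\curs{S}_1(v_0)/\curs{S}_0(v_0)=\deg(v_0)=2(n-1)$, which says only that $v_0$ is reciprocally joined to every other vertex; taking $n=3$ with $v_0$ reciprocally joined to $x$ and $y$ and no edge between $x$ and $y$ gives equality in a non-complete digraph. Since the density coefficient $D_k$ of Definition \ref{Def:dc} is only defined for $k\ge 2$, this does not damage the way the lemma is used, but on the equality statement your proposal is both more complete and more careful than the paper's own proof.
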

\begin{proof}
	We prove the statement by a double counting argument closely following the one given in \cite[Section 10.4]{Jukna}. Let $U$ be the set of all pairs $(\tau,\sigma)$ where $\sigma$ is a directed $(k+1)$-clique containing $v_0$ and $\tau\subseteq\sigma$ is a directed $k$-clique containing $v_0$. Then one can count the number of elements of $U$ in two ways. First, the number of $k$-sub-cliques $\tau$ of a fixed $(k+1)$-clique $\sigma$ containing $v_0$ is exactly $k$, therefore
	\[
	|U|=k \curs{S}_k(v_0).
	\]
	On the other hand, a fixed $k$-clique $\tau$ is a subclique of at most $(n-k)(k+1)$ distinct $(k+1)$-cliques $\sigma$, because there are $(n-k)$ different choices for a vertex that together with $\tau$ will form a $k+1$ clique, and once a vertex was chosen there are $(k+1)$ distinct orientations on the extra $k$ edges, so that the outcome is a directed $(k+1)$-clique. Therefore,
	\[
	|U|\leq (n-k)(k+1)\curs{S}_{k-1}(v_0).
	\]
	Comparing the two expressions, we have:
	\[
	k \curs{S}_k(v_0)\leq (n-k)(k+1)\curs{S}_{k-1}(v_0),
	\]
	which, upon reordering gives the claimed upper bound. Computing the ratio for a complete digraph on $n$ vertices shows that this upper bound is sharp.
\end{proof}
 
 We remark that, while we use the density coefficients as vertex parameters, one can define a global density coefficient on a digraph $\GG$ with vertex set $V$ by 
 \[D_k(\GG) \defeq \frac{1}{|V|}\sum_{v\in V} D_k(v).\]
 By Lemma \ref{Lem:dc}, for any $2\le k\le |V|-1$, $D_k(\GG) = 1$ if and only if $\GG$ is a complete digraph on $V$. 
 Since any digraph on $V$ is a subgraph of the complete digraph on $V$,  $D_k(\GG)$ provides a set of numerical invariants for digraphs, parameterised by dimension (size of clique), which measure a notion of size of the digraph in comparison to the complete digraph on the same vertex set.
 In our specific application, density coefficients did not prove efficient as selection or feature parameters. 

\subsection{Digraph filtrations}
\begin{Defi}\label{Def-tribe-top}
	Let $\GG = (V,E)$ be a digraph, and let $\Gamma$ be a topological operator on digraphs. For a vertex $v\in V$, let \hadgesh{$\Gamma_\GG(v)$} denote $\Gamma(N_\GG(v))$. If $S\subseteq V$ is any subset, let 
	\[\Gamma_\GG(S) \defeq \Gamma(N_\GG(S))=\bigcup_{v\in S} \Gamma_\GG(v).\]
\end{Defi}

Topological operators on digraphs respect inclusions, by definition, and therefore transform a digraph that is filtered by subgraphs into a space that is filtered by closed subspaces.

\begin{Defi}
	\label{Defi-T-filtration}
	Let $\GG = (V,E)$ be a digraph and let $\Gamma$ be a topological operator on digraphs. Fix  a linear ordering $\omega \colon v_1<v_2<\cdots <v_M$ on $V$, where $|V|=M$. For any  integer $n\geq 0$, let $S^\omega_n = \{v\in V\;|\; v\ge v_{M-n}\}$. Define a  filtration  $F^\omega_n(\Gamma(\GG))\subseteq F^\omega_{n+1}(\Gamma(\GG))\subseteq\cdots\subseteq \Gamma(\GG)$   by 
	\[F^\omega_n(\Gamma(\GG)) \defeq \Gamma_{\GG}(S^\omega_n).\]	
	The subspace $F^\omega_n(\Gamma(\GG))$ will be referred to as the \hadgesh{$n$-th $\omega$-filtration layer of $\Gamma(\GG)$}. 
\end{Defi}

From a data analysis point of view filtering $\Gamma(\GG)$, as proposed in Definition \ref{Defi-T-filtration}, can be applied in several ways. In particular, persistent homology \cite{Carlsson} can be used to extract information from the topology in a way that is sensitive to the ordering chosen. \chg{As the orderings can be induced from various vertex functions, the filtrations enable probing into the effect these vertex functions have on the subspace topology. In other words, such filtrations give ways of building $\Gamma(\GG)$ as an increasing union of subspaces, and different choices of orderings may result in totally different sequences of subspaces.  In this article we used graph and topological parameters to determine the ordering on vertices. We also considered only the top (or bottom) of the ordered lists of vertices, and hence studied only the bottom layers of the resulting filtrations. }

\subsection{Data and code} \label{SSec:data-code}
The data used is available at \url{https://doi.org/10.5281/zenodo.4290212}. The entire analysis code can be obtained from \url{https://github.com/JasonPSmith/TriDy}. \chg{The code for the NEST experiments is available at \url{https://github.com/jlazovskis/neurotop-nest/}. The computations for this paper were done using the Maxwell HPC cluster at the University of Aberdeen. To ensure the calculations were computed in a reasonable time frame we used a combination of parallelisation and publicly available packages with efficient algorithms. In particular, the structural parameters of each neighbourhood can be computed independently, so were done simultaneously across multiple nodes and cores. To compute many of the parameters standard python packages were sufficient, such as numpy, scipy and networkx. However, for the more computationally intensive topological parameters we used variations of the Flagser software \cite{Flagser}.}

\begin{landscape}
\begin{figure}[h!]
	\includegraphics[scale=.47]{./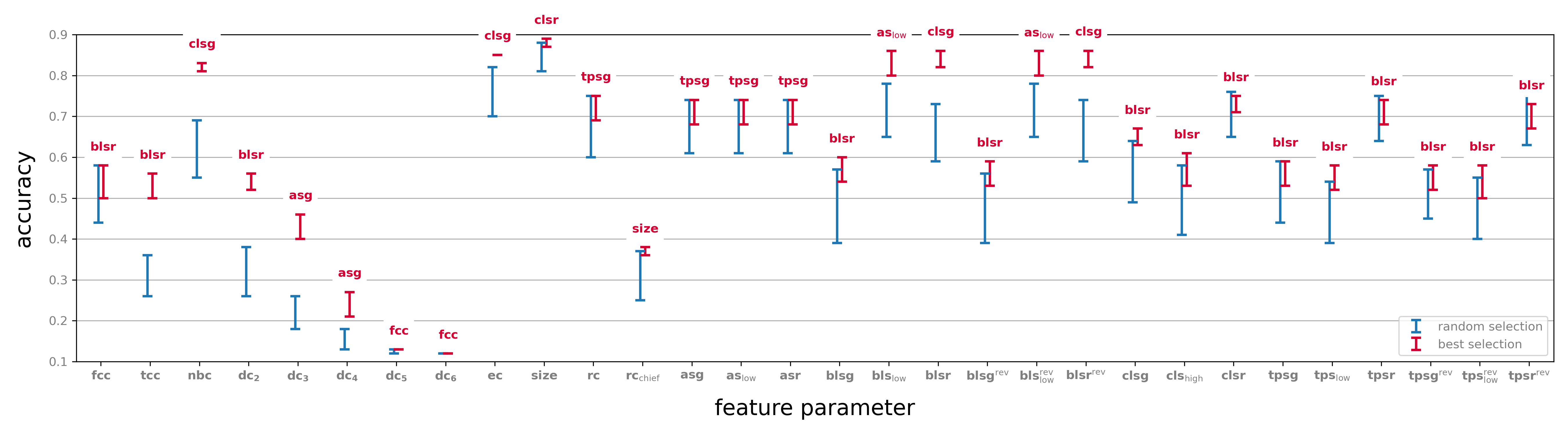}
	\caption{Classification accuracy of signals on the Blue Brain Project microcircuit using 50 randomly selected neighbourhoods, compared with accuracy of neighbourhoods selected by \chg{parameters} with respect to the same feature parameter. Compare with Figure \ref{Fig:comparison-random}.}
	\label{Fig:comparison-random-Supp}
\end{figure}
\end{landscape}

\begin{landscape}
\begin{figure}[ht!]
	\includegraphics[scale=.4]{./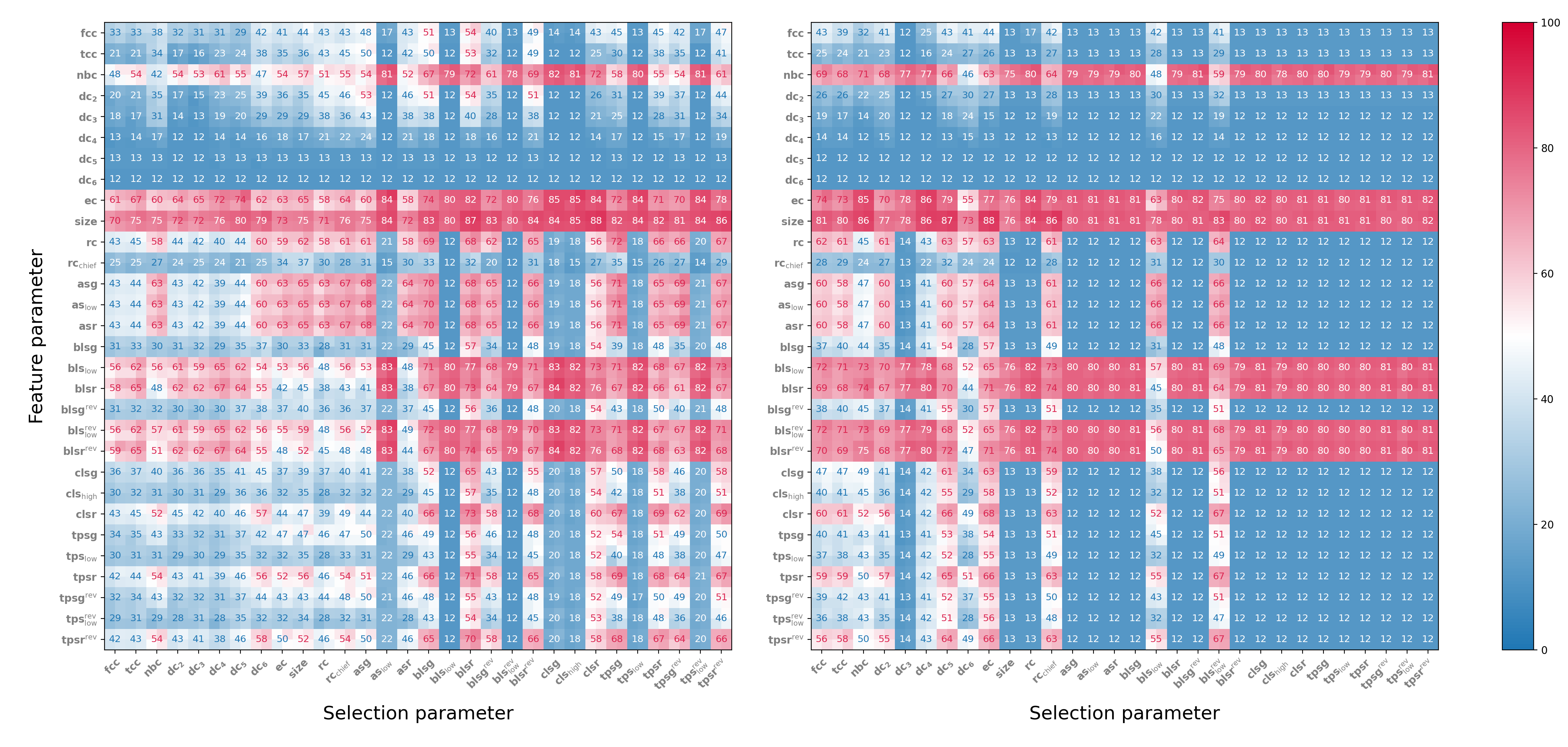}
\caption{Results of classification experiments with respect to all parameters. Left: Classification accuracy with respect to 50 top value vertices by selection parameter. Right: Classification accuracy with respect to 50 bottom value vertices by selection parameter. Compare with Figure \ref{Fig:classification}.}
\label{Fig:classification-all}
\end{figure}
\end{landscape}

\begin{landscape}
\begin{figure}[ht!]
	\includegraphics[scale=.6]{./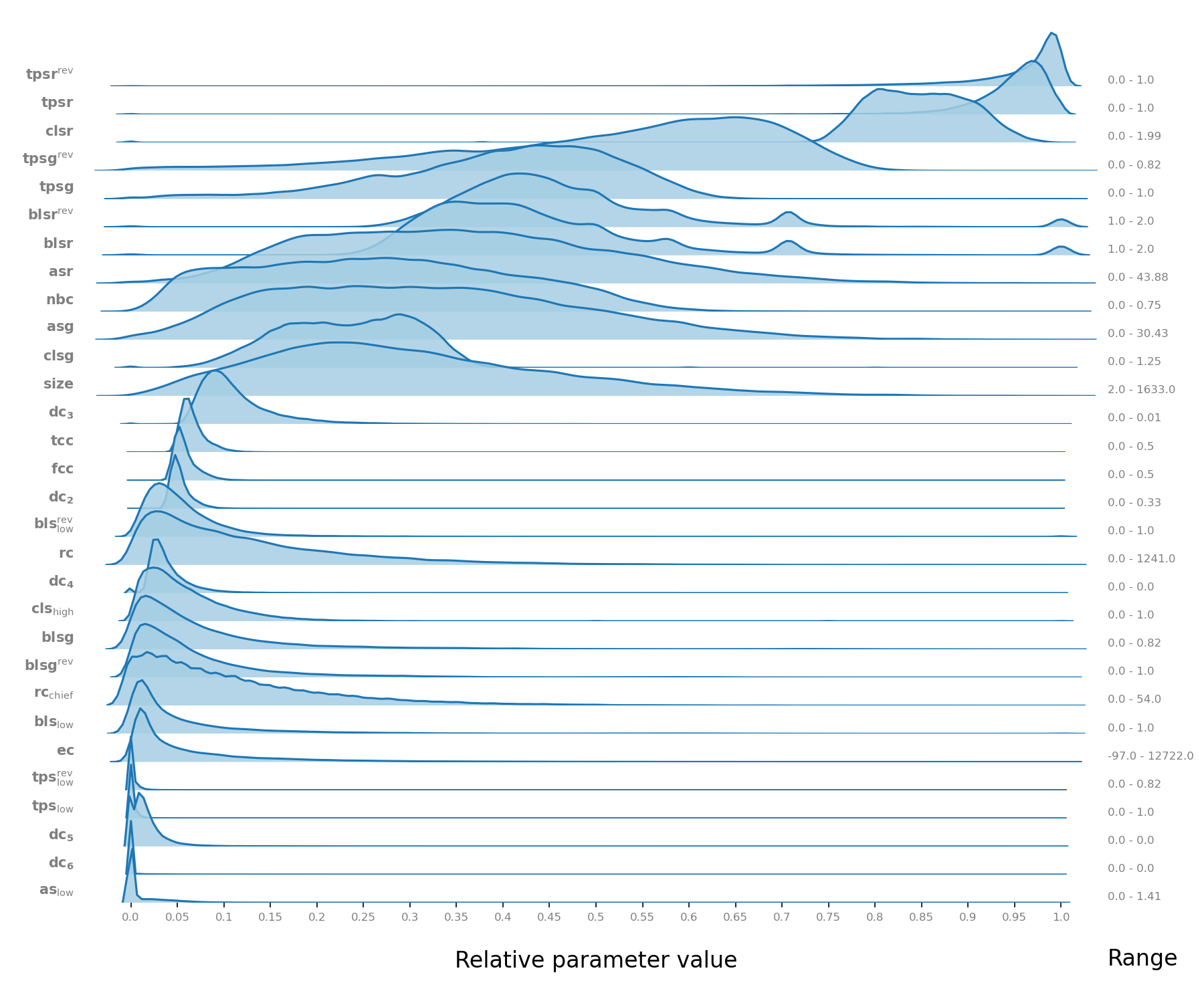}
\caption{Distribution of all parameter values across the entire Blue Brain Project microcircuit. The numbers on the right are minimum to maximum values. The values on the $x$-axis are the relative parameter values, rescaled from 0 to 1. Compare with Figure \ref{fig:distribution}.}
\label{fig:distribution-full}
\end{figure}
\end{landscape}

\begin{landscape}
\begin{figure}[ht!]
	\includegraphics[scale=.47]{./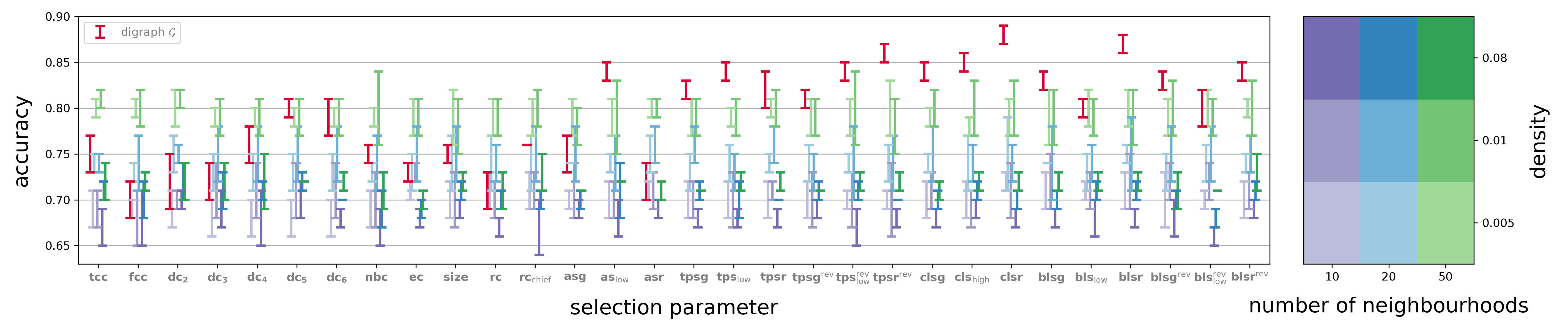}
	\caption{Classification of eight random signals on an Erd\H{o}s--R\'enyi random digraph on 1000 vertices and connection probabilities of 8\%, 1\% and 0.5\% and selection of 10, 20, and 50 neighbourhoods, modelled on a NEST simulator. Selection parameters from Figure \ref{fig:NEST} are included, along with additional parameters. Feature parameter is always \textbf{size}. Graph $\GG$ means the Blue Brain Project graph and its performance with respect to \textbf{size} as feature parameter is given for comparison.}
	\label{Fig:NEST-all}
\end{figure}
\end{landscape}

\end{document}